\documentclass[12pt]{amsart}

\usepackage[margin=1in]{geometry}
\usepackage{amsmath,amssymb,amsthm,mathtools}
\usepackage[T1]{fontenc}
\usepackage{lmodern}
\usepackage{microtype}
\usepackage{enumitem}
\usepackage[colorlinks=true,linkcolor=blue,citecolor=blue,urlcolor=blue]{hyperref}

\newtheorem{theorem}{Theorem}[section]
\newtheorem{proposition}[theorem]{Proposition}
\newtheorem{lemma}[theorem]{Lemma}
\newtheorem{corollary}[theorem]{Corollary}
\theoremstyle{remark}
\newtheorem{remark}[theorem]{Remark}
\theoremstyle{definition}
\newtheorem{definition}[theorem]{Definition}

\newcommand{\DD}{\mathrm{DD}}
\newcommand{\DDh}{\overline{\DD}}
\newcommand{\DADP}{\mathrm{DADP}}

\newcommand{\DDP}{\mathrm{DDP}}
\newcommand{\Z}{\mathbb Z}
\newcommand{\R}{\mathbb R}

\title{The disjoint disks property for Busemann \(G\)-spaces}
\author[T. Fujioka]{Tadashi Fujioka}
\address[T. Fujioka]{Department of Applied Mathematics, Fukuoka University, Fukuoka 814-0180, Japan}
\email{tfujioka210@gmail.com}

\author[S. Gu]{Shijie Gu}
\address[S. Gu]{Department of Mathematics, Northeastern University, Shenyang, Liaoning, China, 110004}
\email{shijiegutop@gmail.com}
\dedicatory{In memory of Robert J. Daverman}
\date{\today}
\subjclass[2020]{Primary 53C70; Secondary 53C23, 57N15, 57P05, 54B15}
\keywords{Busemann \(G\)-space, Busemann conjecture, disjoint disks property,
disjoint homotopies property, generalized manifold, homotopical \(Z_2\)-set}

\begin{document}

\begin{abstract}
We prove that every finite-dimensional Busemann \(G\)-space of
dimension at least five has the disjoint disks property (DDP).
For a sufficiently small metric sphere \(L=S(c,r)\), we show that
every embedded arc contained in an exact distance level is a
homotopical \(Z_2\)-set in \(L\).  It follows that \(L\) has the
disjoint arc-disk property and the disjoint homotopies property.
Daverman's product theorem then gives DDP for \(L\times\mathbb R\),
and a local avoidance argument at the center yields DDP for the
ambient \(G\)-space. Since finite-dimensional Busemann \(G\)-spaces are generalized
manifolds, in dimensions at least five the remaining obstruction to
the Busemann conjecture is the resolution problem.
\end{abstract}

\maketitle

\section{Introduction}\label{sec:introduction}

Busemann's \emph{The Geometry of Geodesics} develops a metric analogue
of intrinsic differential geometry for spaces with no a priori analytic
structure.  The notion of a \(G\)-space provides the axiomatic framework for
this theory, abstracting properties of geodesics familiar
from Finsler geometry.  A fundamental part of Busemann's program is the
conjecture that finite-dimensional \(G\)-spaces are topological
manifolds; see~\cite[pp.~2, 49]{Busemann}.  Thus the manifold problem is
not ancillary to the theory: it is what would place the geometry of
\(G\)-spaces within the setting of manifold geometry.  The conjecture is known in dimensions at most four: the cases
\(n=1,2\) are due to Busemann~\cite[(9.7) and (10.4), pp.~46--47 and~52--53]{Busemann}, the
three-dimensional case to Krakus~\cite{Krakus}, and the
four-dimensional case to Thurston~\cite{Thurston}. 

A number of special cases of the Busemann conjecture are known under
additional curvature assumptions.  Berestovskii proved the conjecture for \(G\)-spaces with Alexandrov curvature locally bounded from below or from above
\cite{BerestovskiiLower,BerestovskiiUpper}.  Andreev proved it for
\(G\)-spaces of nonpositive curvature in the sense of Busemann
\cite{AndreevNPC}.  More recently, the authors used strainer maps to give an
alternative proof of Andreev's result and to establish a
topological stability theorem for locally Busemann nonpositively
curved \(G\)-spaces~\cite[Theorems~1.6 and~1.7]{FGBNPC}.
They also gave sufficient conditions for a Busemann \(G\)-space
to admit a \(C^1\) and DC atlas together with a continuous
Finsler metric compatible with its distance
\cite{FGFinsler}.  In another recent preprint~\cite{FGConvexBalls}, the authors proved that a Busemann \(G\)-space whose
sufficiently small metric balls are convex is a topological
manifold.  As a global consequence, a straight \(G\)-space
whose metric balls are convex is homeomorphic to Euclidean
space~\cite{FGConvexBalls}.

Thurston also showed that every finite-dimensional \(G\)-space
is a homology manifold \cite[Theorem~2.12]{Thurston}.  Together with the basic topological properties
of \(G\)-spaces established and collected by
Berestovskii--Halverson--Repov\v{s}~\cite{BHR}, this places every
finite-dimensional \(G\)-space in the class of generalized manifolds
that arises naturally in high-dimensional manifold recognition. The Busemann conjecture belongs to a broader circle of classical
manifold-recognition problems.  As emphasized by
Halverson--Repov\v{s}~\cite{HR}, it is a special case of the
Bing--Borsuk conjecture and is closely related to the generalized
R.~L.~Moore problem concerning codimension-one manifold factors.
These connections motivate the use of general-position methods in the
study of Busemann \(G\)-spaces.  In dimensions at least five, the
disjoint disks property is a fundamental general-position condition
in manifold recognition and decomposition theory. The Edwards--Quinn recognition theorem
states that a resolvable generalized \(n\)-manifold, \(n\geq5\), is a
topological manifold if and only if it has the disjoint disks property
(DDP)~\cite{Edwards,QuinnResolution, QuinnObstruction}; see also \cite{DavermanBook}.  The DDP does not imply resolvability: for every \(n\geq6\),
there exist nonresolvable generalized \(n\)-manifolds with the DDP
\cite{BFMWTopology,BFMWErratum,BFMWDesingularizing}.  Thus DDP alone does not settle the conjecture.

Halverson--Repov\v{s} explicitly posed whether
every \(n\)-dimensional \(G\)-space, \(n\geq5\), has DDP and suggested the
closely related problem of finding a general-position property of a small
metric sphere which forces its product with \(\mathbb R\) to have DDP
\cite[Section~6, Question~2]{HR}.  Andreev later recorded the same question
as Problem~12.4 in his survey \cite[Problem~12.4]{AndreevSurvey}.  Theorem
\ref{thm:main} gives an affirmative answer.

\begin{theorem}\label{thm:main}
Every finite-dimensional Busemann \(G\)-space of dimension \(n\geq5\)
has the disjoint disks property.
\end{theorem}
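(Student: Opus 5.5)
The proof follows the route outlined in the abstract: reduce DDP for the \(G\)-space \(X\) to a general-position property of a small metric sphere \(L=S(c,r)\), then transfer it back to \(X\). The approach is local. DDP for an ANR generalized manifold is a local property, since maps of \(2\)-disks can be subdivided and adjusted piece by piece. It therefore suffices to show that every point \(c\in X\) has a neighborhood in which any two maps \(f,g\colon D^2\to X\) can be approximated by maps with disjoint images. I will fix \(c\) and \(r\) below the injectivity-type radius at \(c\), so that geodesics from \(c\) extend uniquely to length \(r\). Radial projection then gives a homeomorphism \(B(c,r)\setminus\{c\}\cong L\times(0,r]\), using the basic facts collected by Berestovskii--Halverson--Repov\v{s}. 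In addition, \(L\) is a homology \((n-1)\)-manifold and an ANR with \(n-1\geq4\).

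The first step is to show that an embedded arc \(\alpha\subset L\) lying in an exact distance level, meaning \(d(p,\cdot)\equiv s\) on \(\alpha\) for some \(p\) near \(c\), is a homotopical \(Z_2\)-set in \(L\). In other words, any map \(D^2\to L\) can be instantly homotoped off \(\alpha\). I would build the deformation geometrically: push points along geodesics issuing from \(p\), or along the spheres \(S(p,s')\) with \(s'\neq s\), by a small amount. Extendibility and uniqueness of geodesics make the push continuous, and it moves every point of the level \(\{d(p,\cdot)=s\}\) off that level. The key claim is that nearby points end up off \(\alpha\), because pushed points acquire distance \(\neq s\) from \(p\). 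This needs care near points whose geodesic to \(p\) is tangent to \(L\). I expect this step to be the main obstacle: producing a deformation of \(L\) into itself, rather than of \(X\), that avoids the level set. I would first compose the radial push from \(p\) with the radial projection from \(c\) back onto \(L\), and use Busemann's distance estimates (uniform continuity of extension along geodesics) to show that the composite strictly changes \(d(p,\cdot)\) on \(\alpha\).

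Next I show that every arc in \(L\) can be approximated by arcs made of finitely many subarcs, each lying in an exact distance level. Alternatively, it is enough to obtain the disjoint arc-disk property directly, by perturbing any path \(D^1\to L\) into such a concatenation and then homotoping a disk off each piece in turn. A finite union of homotopical \(Z_2\)-sets glued at points is again handled inductively. This gives the disjoint arc-disk property for \(L\). Applying the same idea with a time parameter gives the disjoint homotopies property: two homotopies \(D^1\times I\to L\) are separated by moving one of them slice by slice off level-arc approximations of the other. Daverman's product theorem then yields DDP for \(L\times\R\), hence for \(B(c,r)\setminus\{c\}\cong L\times(0,r)\).

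Finally comes the local avoidance at the center. Given \(f,g\colon D^2\to B(c,r)\), I first move \(g\) off the point \(c\). A point is a \(0\)-dimensional closed set in a homology \(n\)-manifold with \(n\geq5\); equivalently, I can push \(g\) radially using a small deformation that retracts a neighborhood of \(c\) onto an annulus while keeping \(g\) close, using the cone structure of small balls. I then also arrange \(c\notin f(D^2)\). Both maps now lie in \(L\times(0,r)\), and there DDP gives approximations with disjoint images. The approximations can be taken small enough to keep both images inside the ball. This proves Theorem~\ref{thm:main}.
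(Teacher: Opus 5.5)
\textbf{There is a genuine gap in the arc step, which is the core of the theorem.}

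Your deformation (push along the geodesic from \(p\), then project radially from \(c\) back to \(L\)) is exactly the paper's level-changing motion \(T\) of Lemma~\ref{lem:level-deformation}. It does strictly change \(h=d(p,\cdot)\) at points of \(E_s\), but that is all it does. A point with \(h\) slightly below \(s\) can be pushed onto \(E_s\), possibly onto \(\alpha\).

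More decisively, \(E_s\) separates \(L\) into \(\{h<s\}\) and \(\{h>s\}\). A disk whose image meets both sides still meets \(E_s\) after every small perturbation. So no argument that works by forcing \(d(p,\cdot)\neq s\) can move a disk off \(\alpha\). The disk has to be moved off the arc \emph{inside} the level, and that requires knowing the topology of \(E_s\).

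Your argument for this step never uses \(n\geq5\), yet the claim is false for \(n=4\). In \(\R^4\), \(E_s\) is a round \(2\)-sphere in \(L=S^3\). A tame arc in a \(3\)-manifold is not a homotopical \(Z_2\)-set, since locally \(\pi_2(U,U\setminus\alpha)\cong\pi_1(U\setminus\alpha)\cong\Z\).

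The missing ingredients, as in the paper, are these:
\begin{enumerate}
\item The sub- and superlevel sets \(C_s,D_s\) are ARs. Their complements of closed subsets of \(E_s\) are contractible, both globally and locally near \(E_s\).
\item \(C_s,D_s\) are homology \((n-1)\)-manifolds with boundary \(E_s\), so Mitchell's theorem makes \(E_s\) a homology \((n-2)\)-manifold.
\item Bredon's nonseparation theorem shows that an arc does not separate connected open subsets of \(E_s\). This is exactly where \(1\leq(n-2)-2\) enters.
\item Combining this with \(T\) gives local path and loop control in \(L\setminus\alpha\) near \(\alpha\).
\item The Banakh--Cauty--Karassev local-negligibility criterion converts that control into the \(Z_2\) property.
\end{enumerate}
The same structure of levels (connectedness and local path connectedness of \(E_s(p)\)) is also what justifies your unexplained second step. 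There, consecutive points of a path must be joined by an arc inside one level. The paper chooses \(p\) on the path by the intermediate value theorem so that both points are equidistant from it.

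\textbf{Your treatment of the center is also wrong as stated.} A point of a generalized \(n\)-manifold need not be a homotopical \(Z_2\)-set; consider the cone point of the suspension of a non-simply-connected homology sphere. Nor can there be a small deformation of a ball about \(c\) into an annulus about \(c\). The ball is contractible, while the annulus is homotopy equivalent to \(L\), and \(H_{n-1}(L)\cong\Z\).

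What is actually needed is \(\pi_1(L)=0\), due to Berestovskii--Halverson--Repov\v{s}. This gives \(\pi_k(N,N\setminus\{c\})=0\) for \(k\leq2\), and the Banakh--Cauty--Karassev neighborhood-basis criterion then yields Proposition~\ref{prop:vertex-z2}. Alternatively, since you already note that DDP is local, you can bypass the center entirely. Cover \(X\) by punctured balls \(U(c,4r)\setminus\{c\}\cong S(c,r)\times\R\) with varying centers, as in the paper's final remark.

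Finally, your slice-by-slice DHP argument is unjustified, since the level-arc approximations need not vary continuously in time. It is also unnecessary: Daverman's product theorem needs only DADP, and the paper obtains DHP formally from Banakh--Valov.
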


Combining Theorem~\ref{thm:main} with manifold recognition gives the
following reformulation of what remains of the Busemann conjecture in high
dimensions.

\begin{corollary}\label{cor:resolution}
Let \(X\) be a finite-dimensional Busemann \(G\)-space of dimension
\(n\geq5\).  Then \(X\) is a topological \(n\)-manifold if and only if
\(X\) is resolvable.
\end{corollary}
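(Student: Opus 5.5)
The corollary should follow by combining Theorem~\ref{thm:main} with the Edwards--Quinn recognition theorem. The main task is to check that a finite-dimensional Busemann \(G\)-space satisfies all the hypotheses of that theorem. The plan is to first verify that \(X\) is a generalized \(n\)-manifold, that is, a finite-dimensional, locally compact, separable, metrizable ANR homology \(n\)-manifold. Then DDP from Theorem~\ref{thm:main} is fed into Edwards' theorem in one direction, and resolvability is observed to be trivial in the other.

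\emph{Step 1 (generalized manifold structure).} I would cite Thurston~\cite[Theorem~2.12]{Thurston} for the fact that a finite-dimensional \(G\)-space is a homology \(n\)-manifold. From Berestovskii--Halverson--Repov\v{s}~\cite{BHR}, I would take that \(X\) is locally compact, separable and metrizable (by definition of a \(G\)-space, since bounded closed sets are compact). I would also take from there that \(X\) is locally contractible: small balls contract along the unique geodesics to their centers, since these depend continuously on endpoints locally. Finite-dimensionality together with local contractibility then gives that \(X\) is an ENR, hence an ANR. One point needs care here: the dimension \(n\) in the hypothesis must be the covering dimension, and it must coincide with the homology-manifold dimension. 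I would take this equality from Thurston's result, so that ``dimension \(n\geq5\)'' has an unambiguous meaning. I expect this step to be the main obstacle, though it is only bookkeeping. The difficulty is in matching the precise hypotheses of the recognition theorem (ANR, homology manifold of the right dimension, connectedness is not needed) with the results available in the literature.

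\emph{Step 2 (the two implications).} If \(X\) is a topological \(n\)-manifold, then the identity map \(X\to X\) is a cell-like surjection from a manifold, so \(X\) is resolvable. Conversely, suppose \(X\) is resolvable. By Step 1, \(X\) is a generalized \(n\)-manifold with \(n\geq5\), and by Theorem~\ref{thm:main} it has DDP. The Edwards cell-like approximation theorem~\cite{Edwards,DavermanBook} states that a cell-like map \(f\colon M\to X\) from an \(n\)-manifold onto a finite-dimensional ANR with DDP, \(n\geq5\), is approximable by homeomorphisms. Applied to a resolution of \(X\), it shows that \(X\) is homeomorphic to \(M\), hence a topological \(n\)-manifold. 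Quinn's work~\cite{QuinnResolution,QuinnObstruction} is not needed for this direction.
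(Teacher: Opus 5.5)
Your proposal is correct and follows the paper's argument. You obtain the generalized-manifold structure of \(X\) from Thurston and Berestovskii--Halverson--Repov\v{s}, note that a manifold is resolved by the identity map, and for the converse combine the DDP from Theorem~\ref{thm:main} with the Edwards cell-like approximation part of the Edwards--Quinn recognition theorem; your remark that Quinn's resolution obstruction is not needed once resolvability is assumed is accurate.
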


% \begin{proof}
% A finite-dimensional Busemann \(G\)-space is a generalized manifold
% \cite{Thurston,BHR}.  If it is resolvable, Theorem~\ref{thm:main} and the
% Edwards--Quinn recognition theorem imply that it is a manifold.  The converse
% is immediate, since a manifold is resolved by the identity map.
% \end{proof}

% \begin{remark}
% The theorem does not settle the Busemann conjecture.  It removes the DDP
% part of the recognition problem; the resolution obstruction remains.  In
% general, resolvability of homology manifolds is governed by Quinn's resolution
% obstruction \cite{QuinnObstruction}.
% \end{remark}

\subsection{Idea of the proof}

Let \(X\) be a finite-dimensional \(G\)-space of dimension \(n\geq5\). The proof is organized around a sufficiently small metric sphere
\(L=S(c,r)\) in \(X\).  Fix \(p\in L\) and set \(h(x)=d(p,x)\) on \(L\).
Lemma~\ref{lem:level-deformation} gives a small continuous motion
\(T(z,t)\) in \(L\) for which \(h(T(z,t))\) is strictly larger than
\(h(z)\) when \(t>0\), and strictly smaller when \(t<0\).
This level-changing motion is the main metric ingredient in the proof.
The construction essentially goes back to Krakus~\cite{Krakus} and
Thurston~\cite{Thurston}, and was also used by
Berestovskii--Halverson--Repov\v{s}~\cite{BHR}.

For \(0<s<2r\), let \(E_s=h^{-1}(s)\).  We show that the corresponding
closed sublevel and superlevel sets are contractible ANRs and homology manifolds with boundary \(E_s\). Mitchell's boundary
theorem \cite{Mitchell} identifies \(E_s\) as a cohomologically locally
connected homology \((n-2)\)-manifold in his sense.  Bredon's comparison and
nonseparation theorems \cite[Theorems~V.16.8 and~V.16.20]{Bredon}, together
with local path connectedness of \(E_s\), give arcwise connectedness of
complements of embedded arcs in connected open subsets of \(E_s\).
Combining this with the level-changing motion gives local path and loop
control in \(L\setminus A\) for an embedded arc \(A\subset E_s\).
The local-negligibility criterion of Banakh--Cauty--Karassev
\cite[Theorem 3.2(1)]{BCK} then implies that \(A\) is a
homotopical \(Z_2\)-set in \(L\).

Every path in \(L\) can be approximated by a path whose image is contained
in a finite union of such level arcs.  This gives the disjoint arc-disk
property (DADP) for \(L\).  Daverman's product theorem
\cite[Proposition~2.10]{Daverman} then gives DDP for \(L\times\mathbb R\).
As an additional consequence, \(L\) has the disjoint homotopies property
(DHP), by \cite[Theorem~8.3(2)]{BV}.

A punctured sufficiently small metric ball in \(X\) has radial product
coordinates with factor \(L\).  To pass through its center, we use the
simple connectivity of the link to show that the center is a homotopical
\(Z_2\)-point, again by Banakh--Cauty--Karassev.  Disk maps are first moved
off the center and then separated in a punctured annulus.  Finally,
Banakh--Valov's local-to-global result \cite[Proposition~5.4(2)]{BV} gives DDP for \(X\).

The dimension restriction enters in Bredon's nonseparation argument.  The
level \(E_s\) has dimension \(n-2\), while the exceptional set is an arc,
so one needs \(1\leq(n-2)-2\), which is exactly \(n\geq5\).

\subsection{Conventions and basic terminology}

We write
\[
 U(x,r)=\{y:d(x,y)<r\},\qquad B(x,r)=\{y:d(x,y)\leq r\}, \qquad S(x,r) = \{y:d(x,y)=r\}
\]
for the open metric ball, the closed metric ball and the metric sphere centered at \(x\) with radius \(r\), respectively.  Throughout, dimension means Lebesgue covering dimension.

We use Busemann's original meaning of a \(G\)-space.  Thus \(X\) is a metric
space satisfying the following four axioms:
\begin{enumerate}[label=(\roman*)]
\item if \(x\ne y\), there is \(z\notin\{x,y\}\) such that
      \(d(x,z)+d(z,y)=d(x,y)\);
\item every bounded infinite subset has an accumulation point;
\item for every \(w\in X\) there is \(\rho_w>0\) such that, for any
      distinct \(x,y\in U(w,\rho_w)\), there is
      \(z\in U(w,\rho_w)\setminus\{x,y\}\) with
      \(d(x,y)+d(y,z)=d(x,z)\);
\item if \(x\ne y\), \(d(x,y)+d(y,z_i)=d(x,z_i)\) for \(i=1,2\),
      and \(d(y,z_1)=d(y,z_2)\), then \(z_1=z_2\).
\end{enumerate}
See \cite[Definition~2.1]{BHR}.  A \emph{geodesic segment} is an isometric
map of a compact interval into \(X\); we also use this term for its image.
The axioms imply that \(X\) is a complete, locally compact geodesic space,
that sufficiently close points are joined by a unique segment, and that
these segments admit locally unique prolongations
\cite[Remark~2.2]{BHR}.  Thus (iii) is local extendability, and (iv) is
uniqueness of a prolongation with prescribed length.

We recall the topological terminology used below.  A metric space \(Y\) is an
\emph{absolute neighborhood retract} (ANR) if, whenever \(Y\) is embedded as
a closed subset of a metric space \(Z\), some neighborhood of \(Y\) in \(Z\)
retracts onto \(Y\).  It is an \emph{absolute retract} (AR) if \(Y\) is a
retract of every metric space in which it is embedded as a closed subset.  We shall use the standard
facts that a finite-dimensional locally contractible separable metric space is an ANR,
and that a contractible metric ANR is an AR; see
\cite{Hanner} and \cite[Theorem~2.7]{Thurston}.

Unless another theory is specified, homology means singular homology with
integer coefficients.  A locally compact Hausdorff space \(M\) is an \emph{integral homology
\(m\)-manifold} if, for every \(x\in M\),
\[
 H_i(M,M\setminus\{x\};\mathbb Z)\cong
 H_i(\mathbb R^m,\mathbb R^m\setminus\{0\};\mathbb Z)
\]
for all \(i\).  A locally compact (separable) metric ANR which is an
integral homology \(m\)-manifold is called a \emph{generalized
\(m\)-manifold}.  For the levels \(E_s\), we use the homology-manifold
notion in the Borel--Moore theory of Mitchell and Bredon
\cite{Mitchell,Bredon}; this will be indicated explicitly.  We do not assume
local contractibility of \(E_s\), or identify its homology in that theory
with singular homology.

A metric space \(Y\) has the \emph{disjoint disks property} (DDP) if, for any
two maps \(f,g\colon D^2\to Y\) and every \(\varepsilon>0\), there are
\(\varepsilon\)-close maps \(f',g'\) with
\[
                         f'(D^2)\cap g'(D^2)=\varnothing .
\]
It has the \emph{disjoint arc-disk property} (DADP) if the analogous
statement holds for maps of \(D^1\) and \(D^2\).  It has the
\emph{disjoint homotopies property} (DHP) if any two path homotopies
\(D^1\times I\to Y\) can be approximated so that their images at each common
time are disjoint.

We shall also use the controlled homotopical notation of Banakh--Valov
\cite[Definitions~3.1 and~5.1]{BV}.  If \(\mathcal U\) is an open cover of
\(Y\), two maps with the same domain are \(\mathcal U\)-homotopic if they
are joined by a homotopy for which each point-track is contained in a member
of \(\mathcal U\).  For nonnegative integers \(m,a,b\), the property
\(m\text{-}\DDh^{\{a,b\}}\) means that, for every open cover
\(\mathcal U\) of \(Y\) and maps
\[
 f\colon I^m\times I^a\to Y,\qquad g\colon I^m\times I^b\to Y,
\]
there are maps \(f',g'\), respectively \(\mathcal U\)-homotopic to \(f,g\),
such that
\[
 f'(\{z\}\times I^a)\cap g'(\{z\}\times I^b)=\varnothing
 \qquad\text{for every }z\in I^m.
\]
In particular, \(0\text{-}\DDh^{\{1,2\}}\),
\(1\text{-}\DDh^{\{1,1\}}\), and \(0\text{-}\DDh^{\{2,2\}}\) are the
homotopical forms of DADP, DHP, and DDP, respectively.  For DHP, the common
coordinate \(z\in I\) is the time parameter.  For metric ANRs these
controlled homotopical formulations are equivalent to the corresponding
approximation formulations: sufficiently close maps are joined by
arbitrarily small homotopies; see Hanner~\cite[Theorem~4.1]{Hanner}.

\section{Small metric spheres and radial operations}

We use the standard local consequences of the Busemann \(G\)-axioms;
see Busemann~\cite{Busemann}, Thurston~\cite{Thurston} and
Berestovskii--Halverson--Repov\v{s}~\cite{BHR}.  Throughout, \(X\) denotes a \(G\)-space of finite dimension \(n\).  Following \cite[Section~3]{BHR}, define the \emph{local
extendability radius} \(\rho(c)\in(0,\infty]\) to be the supremum of the
radii \(R>0\) for which axiom~(iii) holds with \(w=c\) and \(\rho_w=R\).
Choose once and for all
\[
                              0<8r<\rho(c).
\tag{2.1}\label{eq:radius-choice}
\]  This choice ensures that:
\begin{enumerate}[label=(G\arabic*),ref=G\arabic*]
\item \label{cond:G1} any two points used below lie in \(U(c,\rho(c))\) and are joined
by a unique shortest segment;
\item \label{cond:G2} these segments depend continuously on their endpoints;
\item \label{cond:G3} all prolongations of length less than \(r\) used below are available
and unique.
\end{enumerate}
In particular, every sufficiently small ball centered at \(c\)
contracts radially to \(c\).  Since \(c\) is arbitrary, \(X\) is
locally contractible and hence, being finite-dimensional, is an
ANR~\cite[Proposition~3.13]{BHR}.

Set \(L = S(c,r)\). Radial coordinates give a homeomorphism
\[
 U(c,4r)\setminus\{c\}\cong L\times(0,4r).
\tag{2.2}\label{eq:radial-product}
\]
Indeed, by the canonical geodesic-cone structure of
\(B(c,4r)\)~\cite[Proposition~3.3]{BHR}, every point of
\(U(c,4r)\setminus\{c\}\) lies on a unique radial geodesic from \(c\),
and this geodesic meets \(L=S(c,r)\) in a unique point.  Under the
homeomorphism above, the second coordinate is the distance from \(c\),
and the first is this intersection point with \(L\). 

% The local contractibility of \(L\) follows directly from the proof of
% \cite[Theorem~2.12]{Thurston}; see also \cite[Proposition~3.18]{BHR}.
% Indeed, for \(q\in L\) and sufficiently small \(\delta>0\), join each point
% of \(L\cap U(q,\delta)\) to \(q\) by its segment and project the segment
% radially from \(c\) back to \(L\).  Krakus' second retraction inequality
% \cite[Corollary~2.4]{Thurston} keeps the contraction in
% \(L\cap U(q,\delta)\).  Such neighborhoods form a basis at \(q\).
% Thus \(L\) is a compact finite-dimensional ANR, and therefore an ENR.
% Moreover, \(L\) is an integral homology \((n-1)\)-manifold with the
% homology of \(S^{n-1}\)~\cite[Theorem~3.19]{BHR}.  We shall also use that
% \(X\) itself is an ANR~\cite[Proposition~3.13]{BHR}.

\begin{lemma}\label{lem:link-pi1}
The sphere \(L\) is a compact ANR homology \((n-1)\)-manifold with
the homology of \(S^{n-1}\). If \(n\geq3\), then \(L\) is simply connected.
\end{lemma}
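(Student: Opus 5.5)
The plan has three parts: first compactness and the ANR property, then the homology-manifold statement, then simple connectivity from the radial product (2.2).

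\textbf{Compactness and ANR.} Compactness of \(L\) follows from axiom (ii): \(L\) is closed and bounded, so every infinite subset of it accumulates in \(L\). Since \(L\subset X\) has finite dimension at most \(n\), it suffices to show \(L\) is locally contractible and then invoke Hanner. For \(q\in L\) and small \(\delta>0\), take \(x\in L\cap U(q,\delta)\). Join \(x\) to \(q\) by the unique segment, which depends continuously on \(x\) by (\ref{cond:G2}). Then push each point of this segment radially back to \(L\) using the projection \(U(c,4r)\setminus\{c\}\to L\) from (2.2). The segment stays at distance at least \(r-\delta>0\) from \(c\), so the projection is defined. This gives a contraction of \(L\cap U(q,\delta)\) to \(q\) inside \(L\). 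The point I expect to be the main technical obstacle is keeping this contraction inside a prescribed small neighborhood of \(q\). That needs a Lipschitz-type estimate for radial projection near \(L\); this is Krakus' retraction inequality as used in \cite[Theorem~2.12]{Thurston} and \cite[Proposition~3.18]{BHR}, and I would cite it. Hence \(L\) is a compact ANR.

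\textbf{Homology manifold and homology of \(S^{n-1}\).} The homeomorphism (2.2) identifies \(U(c,4r)\setminus\{c\}\) with \(L\times(0,4r)\). This set is open in \(X\), and \(X\) is a homology \(n\)-manifold by Thurston \cite[Theorem~2.12]{Thurston}. Local homology of a product with \(\R\) shifts degree by one via the Künneth formula:
\[
H_i(L\times\R,\,L\times\R\setminus\{(x,t)\})\cong H_{i-1}(L,\,L\setminus\{x\}).
\]
So \(L\) is a homology \((n-1)\)-manifold. For the global homology, \(B(c,r)\) is contractible by the radial contraction to \(c\). Also \(X\setminus\{c\}\) deformation retracts locally, i.e. \(U(c,4r)\setminus\{c\}\simeq L\). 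Excision then gives
\[
\tilde H_{i-1}(L)\cong H_i\bigl(U(c,4r),\,U(c,4r)\setminus\{c\}\bigr),
\]
and the right side is \(\mathbb Z\) in degree \(n\) and \(0\) otherwise. Hence \(L\) has the homology of \(S^{n-1}\). This agrees with \cite[Theorem~3.19]{BHR}, which can simply be cited.

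\textbf{Simple connectivity for \(n\ge3\).} I would use the local connectivity of generalized manifolds. For \(n\ge3\), removing a point from a generalized \(n\)-manifold does not change local \(\pi_1\). Concretely, let \(\gamma\) be a loop in \(L\). View \(\gamma\) in \(U(c,4r)\setminus\{c\}\) at radius \(r\); it is null-homotopic in \(U(c,4r)\) by the radial cone. This gives a disk \(D\to U(c,4r)\).

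The key step is to push \(D\) off \(c\). I would argue this via the codimension count: a point is a homotopical \(Z_2\)-set-type condition in a generalized \(n\)-manifold with \(n\ge3\). One could use that \(X\) is a homology manifold of dimension at least \(3\), so \(c\) has \(LC^1\) complement, which follows from duality in ANR homology manifolds via Daverman's book. Once \(D\) misses \(c\), radially projecting it to \(L\) contracts \(\gamma\) in \(L\).

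The main risk is that \(LC^1\) complements of points is a homotopy rather than homology statement. If citing it is not acceptable, I would instead argue inductively as Thurston does, using the \(LC^{n-2}\) co-connectivity of points in homology \(n\)-manifolds which are ANRs. That suffices for \(\pi_1\) once \(n-2\ge1\).
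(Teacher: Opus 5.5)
\textbf{The compactness, ANR and homology parts are fine; the simple-connectivity step has a genuine gap.}

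Your first two parts work. The Künneth computation in the chart \eqref{eq:radial-product}, combined with Thurston's theorem and the exact sequence of \((U(c,4r),U(c,4r)\setminus\{c\})\), is a legitimate substitute for citing \cite[Theorem~3.19]{BHR}. For local contractibility alone you do not need Krakus' inequality: continuity of the radial projection \(U(c,4r)\setminus\{c\}\to L\) at \(q\) already keeps the projected segments in any prescribed neighborhood of \(q\).

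\textbf{Why the push-off step fails.} The principle you invoke is false: that for \(n\geq3\) a point of a generalized \(n\)-manifold has homotopically \(1\)-LC complement, i.e.\ that removing it does not change local \(\pi_1\). Take the open cone \(cP\) on the Poincar\'e homology \(3\)-sphere \(P\).
\begin{itemize}
\item It is a polyhedron, hence an ANR.
\item At the cone point \(v\), \(H_i(cP,cP\setminus\{v\})\cong\tilde H_{i-1}(P)\), so \(cP\) is a generalized \(4\)-manifold.
\item Yet \(v\) has a neighborhood basis of punctured cones \(P\times(0,\varepsilon)\). Each is homotopy equivalent to \(P\) via the inclusions, and \(\pi_1(P)\) is the binary icosahedral group.
\end{itemize}
Duality only gives homological co-connectivity: it kills \(H_1\) of small punctured neighborhoods, not \(\pi_1\). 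So your fallback via \(LC^{n-2}\) co-connectivity fails for the same reason. Your argument uses nothing about \(c\) beyond a cone neighborhood in a generalized manifold, so applied to \(cP\) it would show that \(P\) is simply connected.

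In the present setting the step is also circular. Since \(U(c,4r)\setminus\{c\}\simeq L\), pushing a null-homotopy of a loop of \(L\) off \(c\) is equivalent to that loop being trivial in \(\pi_1(L)\). The paper uses this equivalence in the opposite direction in Proposition~\ref{prop:vertex-z2}.

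\textbf{What is missing} is an input from the \(G\)-space geometry. The paper simply cites \cite[Theorem~3.22]{BHR}. A self-contained route uses the tools of Section~3.
\begin{enumerate}
\item Fix \(0<s''<s'<2r\) and cover \(L\) by the open sets \(\{h<s'\}\) and \(\{h>s''\}\).
\item A compact subset of the first lies in some \(C_b\) with \(0<b<s'\); a compact subset of the second lies in some \(D_b\) with \(s''<b<2r\).
\item These sets are contractible by Proposition~\ref{prop:subsuper-ar}. Its proof uses only Lemmas~\ref{lem:level-deformation} and~\ref{lem:filtration} and local contractibility of \(L\), not simple connectivity. Hence both open sets are path connected and simply connected.
\item The intersection \(\{s''<h<s'\}\) is nonempty. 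The Mayer--Vietoris sequence shows that \(\tilde H_0(\{s''<h<s'\})\) is a quotient of \(H_1(L)\), which vanishes because \(n-1\geq2\). So the intersection is path connected.
\item Van Kampen then gives \(\pi_1(L)=1\).
\end{enumerate}

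Your push-off strategy can also be repaired, but only with similar geometric input. By \cite[Lemma~3.2]{BHR}, \(c\) lies in a product chart \(S(c',r')\times\R\) about a nearby center \(c'\), as in the final remark of the paper. A van Kampen argument in that product, using that points do not locally separate \(S(c',r')\), shows that small loops in \(X\setminus\{c\}\) near \(c\) bound small disks missing \(c\).
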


\begin{proof}
The homology-manifold and homology-sphere assertions are
\cite[Theorem~3.19]{BHR}.  Local contractibility follows from the
proof of \cite[Theorem~2.12]{Thurston}, or from
\cite[Proposition~3.18]{BHR}.  Since \(L\) is
finite-dimensional, it is an ANR.  Simple connectivity for \(n\geq3\)
is \cite[Theorem~3.22]{BHR}.
For \(n\geq3\), these properties also imply that \(L\) is homotopy equivalent
to \(S^{n-1}\); see \cite[Theorem~1.1]{GuSpheres}.
We shall not use this stronger conclusion.
\end{proof}

For a point \(a\) and a nearby point \(z\ne a\), write \(H_a^t(z)\)
for the point on the segment from \(a\) through \(z\), prolonged when
\(t>0\) and shortened when \(t<0\), such that
\[
d(a,H_a^t(z))=d(a,z)+t,\qquad
d(z,H_a^t(z))=|t|.
\tag{2.3}\label{eq:Ht}
\]
By (\ref{cond:G1})--(\ref{cond:G3}), these maps are well-defined 
and are jointly continuous on their natural common domains.

Fix a point \(p\in L\) and consider the distance function
\[
 h\colon L\longrightarrow[0,2r],\qquad h(x)=d(p,x).
\]
For \(0<s<2r\), set
\[
 C_s=h^{-1}([0,s]),\qquad
 D_s=h^{-1}([s,2r]),\qquad
 E_s=h^{-1}(s).
\]
Thus \(E_s=L\cap S(p,s)\).  We call \(C_s\), \(D_s\), and \(E_s\),
respectively, the closed sublevel set, the closed superlevel set, and
the level set of \(h\) at \(s\). The antipode \(p^*\in L\) is the
unique point satisfying \(d(p,p^*)=2r\).

The following construction is a two-sided form of Krakus' retraction
inequalities.  The
negative-time inequality is the second retraction inequality~\cite[Corollary~2.4]{Thurston}, while the
positive-time inequality follows from the same strict radial comparison
as the first \cite[Proposition~2.3]{Thurston}.  The radial contraction in \cite[Proposition~3.18]{BHR}
uses the same technique.  

\begin{lemma}[local level-changing motion]\label{lem:level-deformation}
For every compact interval \(J\subset(0,2r)\), there is
\(\varepsilon_J>0\) and a jointly continuous map
\[
 T\colon h^{-1}(J)\times(-\varepsilon_J,\varepsilon_J)\longrightarrow L,
 \qquad T(z,0)=z,
\]
such that, for \(t\neq0\),
\[
 \operatorname{sign}\bigl(h(T(z,t))-h(z)\bigr)
       =\operatorname{sign}t,
\tag{2.4}\label{eq:sign}
\]
and \(d(T(z,t),z)\leq2|t|\).
\end{lemma}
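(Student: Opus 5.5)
The plan is to build \(T\) as a composition of a motion along the segment from \(p\) with radial projection back to \(L\).  For \(z\in h^{-1}(J)\), let \(\pi\colon U(c,4r)\setminus\{c\}\to L\) be the radial projection furnished by \eqref{eq:radial-product}.  We set
\[
 T(z,t)=\pi\bigl(H_p^{t}(z)\bigr).
\]
That is, we first move \(z\) a distance \(|t|\) along the segment \([p,z]\), away from \(p\) when \(t>0\) and towards \(p\) when \(t<0\), and then project radially from \(c\) onto \(L\).  Since \(h\geq \min J>0\) on \(h^{-1}(J)\) and \(d(p,z)\leq\max J<2r\), the maps \(H_p^t\) are defined for \(|t|<\varepsilon_J\) with \(\varepsilon_J<\min J\).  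Their images lie in \(U(c,4r)\setminus\{c\}\) once \(\varepsilon_J<r\).  Joint continuity of \(T\) then follows from (\ref{cond:G2}), (\ref{cond:G3}), the continuity of \(H\) noted after \eqref{eq:Ht}, and continuity of \(\pi\).  Compactness of \(h^{-1}(J)\) is what allows a single \(\varepsilon_J\).

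For the distance bound, write \(w=H_p^t(z)\).  Then \(d(w,z)=|t|\), so \(|d(c,w)-r|\leq|t|\).  The point \(\pi(w)\) lies on the radial segment through \(w\) at distance \(|d(c,w)-r|\) from \(w\), hence \(d(\pi(w),w)\leq|t|\).  The triangle inequality then gives \(d(T(z,t),z)\leq 2|t|\).

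The real content is the sign condition \eqref{eq:sign}, which is the strict two-sided Krakus comparison.  For \(t>0\), let \(w=H_p^t(z)\), so \(d(p,w)=h(z)+t\).  We must show \(d(p,\pi(w))>h(z)\).

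If \(d(c,w)=r\), then \(\pi(w)=w\) and there is nothing to prove.

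If \(d(c,w)>r\), then \(\pi(w)\) lies on the segment \([c,w]\).  The claim is that radial shortening toward \(c\) cannot reduce distance to \(p\in L\) by as much as \(d(w,\pi(w))\leq t\).  The triangle inequality gives only \(d(p,\pi(w))\geq h(z)+t-d(w,\pi(w))\geq h(z)\), which is non-strict.  To get strict inequality I would use the first retraction inequality \cite[Proposition~2.3]{Thurston}.  Its strict form says that for \(x\) outside \(B(c,r)\) and \(p\in L\), the projection satisfies \(d(p,\pi(x))<d(p,x)\), and that \(d(w,\pi(w))<t\) unless \(c,z,w\) are collinear.  Collinear configurations are ruled out by (\ref{cond:G3}) and uniqueness of prolongations, together with \(z\ne p,p^*\).

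If \(d(c,w)<r\), we compare instead with the prolongation from \(c\), using the analogous strict inequality.

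The case \(t<0\) is the second retraction inequality \cite[Corollary~2.4]{Thurston} applied verbatim, giving \(d(p,\pi(w))<d(p,z)\).

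The main obstacle I expect is obtaining \emph{strict} inequality in every configuration.  The equality cases of the triangle inequality must be analyzed through axiom (iv) (nonbranching) and the uniqueness of segments in \(U(c,\rho(c))\).  That analysis is where the choice \(8r<\rho(c)\) in \eqref{eq:radius-choice} is used.
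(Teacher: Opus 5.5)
Your proposal is correct and is essentially the paper's proof. You use the same map \(T(z,t)=\pi(H_p^t(z))\) and the same displacement bound. You also have the same key fact: \(d(w,\pi(w))=|d(c,w)-r|<|t|\) strictly, because equality would put \(c,z,w\), and hence \(c,p,z\), on one geodesic by uniqueness of prolongation, forcing \(z\in\{p,p^*\}\).

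The paper applies this one strict inequality uniformly: with \(|d(p,\pi(w))-d(p,w)|\leq d(w,\pi(w))\) and \(d(p,w)=h(z)+t\), it covers both signs of \(t\) and both sides of \(L\) at once. So your case split and the appeal to Thurston's Corollary~2.4 are unnecessary. You should drop the clause \(d(p,\pi(x))<d(p,x)\): it bounds \(d(p,\pi(w))\) from the wrong side, and as a general statement it is false (it already fails in the \(\ell^{20}\) plane, which is a \(G\)-space).
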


\begin{proof}
By (\ref{cond:G1})--(\ref{cond:G3}) and the compactness of
\(h^{-1}(J)\), choose \(\varepsilon_J>0\) so that the two
prolongations used below are defined whenever \(|t|<\varepsilon_J\).
Put \(y=H_p^t(z)\) and \(\delta=d(c,y)-r\), and return \(y\) to
\(L\) along the radial segment from \(c\) by setting
\(T(z,t)=H_c^{-\delta}(y)\).
Then \(T(z,t)\in L\).  The triangle inequality gives
\(|\delta|\leq |t|\).  If equality held for \(t\ne0\), then
\(c,z,y\) would lie on one geodesic, with the order determined by the
sign of \(t\).  Since \(p,z,y\) already lie on one geodesic,
uniqueness of prolongation would put \(c,p,z\) on a single geodesic.
Because \(d(c,p)=d(c,z)=r\), this would force \(z=p\) or
\(z=p^*\), contrary to \(J\subset(0,2r)\).  Therefore
\[
                              |\delta|<|t|.
\tag{2.5}\label{eq:delta}
\]
Now
\[
 \left|d(p,T(z,t))-d(p,y)\right|
 \leq d(T(z,t),y)=|\delta|<|t|,
\]
whereas \(d(p,y)=d(p,z)+t\).  This proves
\eqref{eq:sign}. The displacement estimate follows from \eqref{eq:Ht},
\eqref{eq:delta}, and the triangle inequality.  Joint continuity
follows from that of the radial maps
\((z,t)\mapsto H_p^t(z)\) and \((y,u)\mapsto H_c^u(y)\),
since \(\delta=d(c,H_p^t(z))-r\) is continuous in \((z,t)\).
\end{proof}

The strict change relative to the initial level in \eqref{eq:sign} is the
metric property used in Sections~3 and~4.  We do not need a flow law for
\(T\), or monotonicity of \(t\mapsto h(T(z,t))\).

\section{Sublevel and superlevel sets of the distance function}

We now use Lemma~\ref{lem:level-deformation} to analyze the topology of the
sublevel and superlevel sets of \(h\).  These sets are absolute retracts,
and Mitchell's boundary theorem identifies \(E_s\) as a homology manifold
of codimension one in \(L\).  We assume \(n\geq3\) in this section.

Most of the results in this section, especially those concerning \(C_s\),
can be found in Thurston~\cite{Thurston} and
Berestovskii--Halverson--Repov\v{s}~\cite{BHR}.  With Mitchell's
homology convention taken into account, the results concerning \(C_s\)
suffice to establish the homology-manifold structure and connectivity
properties of \(E_s\) needed in Section~\ref{sec:arcs-in-a-distance}.
Nevertheless, we give a symmetric argument for \(C_s\) and \(D_s\),
using the level-changing motion \(T\) of
Lemma~\ref{lem:level-deformation}.  Our purpose is not only to establish
these properties of \(E_s\), but also to illustrate the deformation
method used in the local arguments of
Section~\ref{sec:arcs-in-a-distance}.

\begin{lemma}\label{lem:filtration}
If \(0<a<b<2r\), the inclusions
\(
 C_a\hookrightarrow C_b\)
 and 
 \(D_b\hookrightarrow D_a
\)
are homotopy equivalences.  For the first inclusion the homotopies may be
chosen to preserve every sublevel set of \(h\); for the second they may
be chosen to preserve every superlevel set.
\end{lemma}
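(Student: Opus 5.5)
We build a strong deformation retraction of \(C_b\) onto \(C_a\) by flowing points down the levels of \(h\) with the negative-time motion \(T(\cdot,-t)\) of Lemma~\ref{lem:level-deformation}. The motion \(T\) is not a flow, so we cannot just run it until the level \(a\) is reached. Instead we proceed in small uniform steps. Set \(J=[a,b]\) and let \(\varepsilon=\varepsilon_J\) be as in Lemma~\ref{lem:level-deformation}. Fix \(0<\tau<\varepsilon\). The function
\[
 \phi(z)=h(z)-h(T(z,-\tau))
\]
is continuous and positive on the compact set \(h^{-1}(J)\) by \eqref{eq:sign}, so it has a positive minimum \(\mu\). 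Hence one application of \(T(\cdot,-\tau)\) lowers the level of every point of \(h^{-1}(J)\) by at least \(\mu\), and after finitely many steps (about \((b-a)/\mu\)) everything has been pushed below \(a\). The real difficulty is that the points must stop once they reach \(C_a\), and that the homotopy must stay inside every sublevel set.

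To handle stopping, we use a cutoff. Define a single-step homotopy
\[
 F(z,u)=T\bigl(z,-u\,\tau\,\lambda(h(z))\bigr),\qquad u\in[0,1],
\]
where \(\lambda\colon[0,2r]\to[0,1]\) is continuous, vanishes on \([0,a]\), and is positive on \((a,b]\). Points of \(C_a\) are then fixed. For \(z\in h^{-1}((a,b])\) we need \(h(F(z,u))\le h(z)\) for all \(u\). This holds by \eqref{eq:sign}, which applies for every nonzero negative time in \((-\varepsilon,0)\), not just the endpoint \(-\tau\). Consequently every sublevel set \(C_s\) with \(s\le b\) is preserved at all times \(u\), because \(h\) never increases along a track. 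Points with \(h(z)\le a\) do not move at all. To keep the domain of \(T\) inside \(h^{-1}(J)\), we replace \(J\) by a slightly larger interval \([a',b]\) with \(0<a'<a\). For example, we let \(\lambda\) vanish near \(a\) and apply the step only to points with \(h>a''\), where \(a'<a''<a\); a point whose level has already been lowered into \([a',a'']\) is frozen.

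We then iterate the step \(F\) finitely many times. Near the level \(a\) the decrease per step degenerates, since \(\lambda\to0\) there. To obtain termination we argue differently. Given \(\eta>0\), on \(h^{-1}([a+\eta,b])\) the step lowers levels by a uniform amount, so finitely many iterations give a deformation of \(C_b\) into \(C_{a+\eta}\) that preserves sublevel sets and fixes \(C_a\). We then concatenate deformations over a sequence \(\eta_k\downarrow0\) on the time intervals \([1-2^{-k},1-2^{-k-1}]\). Continuity at time \(1\) is the main obstacle. We expect it to follow because the tracks of the \(k\)-th stage lie in \(h^{-1}([a,a+\eta_{k-1}])\) and have displacement at most \(2\tau_k\) by the bound \(d(T(z,t),z)\le2|t|\); choosing \(\tau_k\to0\) fast enough makes the concatenation uniformly Cauchy. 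The limit \(r_1\) is a retraction \(C_b\to C_a\), since its values lie in \(\bigcap_k C_{a+\eta_k}=C_a\), and the resulting homotopy between the identity and \(r_1\) preserves every sublevel set. This shows that \(C_a\hookrightarrow C_b\) is a homotopy equivalence.

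The superlevel case is symmetric. We use positive times \(T(z,t)\) on \(h^{-1}([a,b'])\) with \(b<b'<2r\), so that levels are raised, fix \(D_b\), and preserve every superlevel set \(D_s\). This gives a strong deformation retraction of \(D_a\) onto \(D_b\), so \(D_b\hookrightarrow D_a\) is a homotopy equivalence.
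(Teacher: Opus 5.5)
There is a genuine gap, at exactly the step you flag as the main obstacle: the infinite concatenation over \(\eta_k\downarrow0\).

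Your estimate ``displacement at most \(2\tau_k\)'' bounds a single application of \(T\). A stage consists of \(N_k\) iterated steps, so \(d(T(z,t),z)\le2|t|\) only gives \(2N_k\tau_k\). Shrinking \(\tau_k\) does not make this small. Each step lowers \(h\) by at most \(2\tau_k\), since \(h\) is \(1\)-Lipschitz, so a given level drop takes proportionally more steps.

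What has to be summable is the metric length of the tracks. What you actually control is the level drop, which is at most \(\eta_{k-1}\) in stage \(k\). Passing from one to the other needs a rate, something like \(h(z)-h(T(z,-t))\ge\kappa t\) uniformly for small \(t\) near \(E_a\). Lemma~\ref{lem:level-deformation} gives only the strict sign, with no rate, and you do not supply one.

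The problem cannot be avoided at \(E_a\) itself. Since \(\lambda\to0\) at \(a\), points just above \(a\) move in steps shrinking to zero. Continuity of \(r_1\) at points of \(E_a\) requires their total displacement to tend to zero with their height above \(a\). So neither the existence and continuity of \(r_1\) nor the continuity of the homotopy at time \(1\) is established. Also, with \(\lambda\) merely positive on \((a,b]\), tracks can overshoot below \(a\), so they need not lie in \(h^{-1}([a,a+\eta_{k-1}])\) as you claim.

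The difficulty is self-imposed. The lemma asks only for a homotopy equivalence with sublevel-preserving homotopies, not a strong deformation retraction, so \(C_a\) need not be fixed pointwise. The paper's construction is as follows.
\begin{enumerate}[label=(\arabic*)]
\item Take \(0<a_0<a\) and a cutoff \(\chi\) equal to \(0\) on \([0,a_0]\) and to \(1\) on \([a,2r]\).
\item Fix \(\tau<0\) with \(T\) defined on \(h^{-1}([a_0,b])\times[\tau,0]\), and set \(F(z)=T(z,\tau\chi(h(z)))\).
\item Points of the collar \(h^{-1}((a_0,a])\) do move, but \(h\) never increases along the scaled homotopy. Hence every sublevel set, in particular \(C_a\), is preserved.
\item Because \(\chi\equiv1\) on \([a,b]\), compactness gives a uniform drop \(\eta>0\) on \(h^{-1}([a,b])\). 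Hence \(F^N(C_b)\subset C_a\) for some finite \(N\).
\item The homotopy from the identity to \(F^N\) on \(C_b\) restricts to one on \(C_a\), so \(F^N\) is a homotopy inverse of the inclusion.
\end{enumerate}
Your idea of freezing points below \(a''<a\) was already pointing this way. Dropping the requirement that \(\lambda\) vanish on all of \([0,a]\) makes the argument finite. The superlevel case is identical with positive times.
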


\begin{proof}
Choose \(0<a_0<a\).  By Lemma~\ref{lem:level-deformation},
choose a fixed negative time \(\tau<0\), sufficiently close to zero,
such that \(T\) is defined on
\(h^{-1}([a_0,b])\times[\tau,0]\).  Let \(\chi\colon[0,2r]\to[0,1]\) be continuous, zero on
\([0,a_0]\), and one on \([a,2r]\), and put
\[
                  F(z)=T\bigl(z,\tau\chi(h(z))\bigr),\qquad z\in C_b,
\]
with \(F(z)=z\) where \(\chi(h(z))=0\).  This is continuous and satisfies \(h(F(z))\leq h(z)\), so it preserves
both \(C_a\) and \(C_b\).  On the compact
set \(h^{-1}([a,b])\), strictness and compactness give
\[
 \eta=\min\{h(z)-h(T(z,\tau)):a\leq h(z)\leq b\}>0.
\]
Choose an integer \(N>(b-a)/\eta+1\).  Then \(F^N(C_b)\) is
contained in \(C_a\).  Scaling the time
\(\tau\chi(h(z))\) from zero to its final value gives a homotopy from
the identity to \(F\) which preserves both \(C_a\) and \(C_b\).  Concatenating the
homotopies for the \(N\) iterates shows that
\(F^N\colon C_b\to C_a\) is a homotopy inverse to the inclusion:
the same homotopy restricted to \(C_a\) stays in \(C_a\).
For \(D_b\hookrightarrow D_a\), choose \(b<b_0<2r\), use positive time
on \(h^{-1}([a,b_0])\), and take a cutoff equal to one on \([0,b]\) and
zero on \([b_0,2r]\).  The same argument, with a uniform positive increase
on \(h^{-1}([a,b])\), gives the required homotopy equivalence.  Each
intermediate map preserves every superlevel set.
\end{proof}

The assertions about \(C_s\) in the next proposition follow more directly
from the proof of \cite[Theorem~2.12]{Thurston}; compare
\cite[Proposition~3.18]{BHR}.  Here we give a symmetric proof for
\(C_s\) and \(D_s\).

\begin{proposition}\label{prop:subsuper-ar}
For every \(0<s<2r\), the sets \(C_s\) and \(D_s\) are compact absolute retracts.  If \(A\subset E_s\) is closed, then
\[
                              C_s\setminus A\simeq *,
                 \qquad       D_s\setminus A\simeq * .
\tag{3.1}\label{eq:subsuper-complements}
\]
\end{proposition}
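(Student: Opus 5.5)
We prove the statement for \(C_s\); the argument for \(D_s\) is the same with positive times and the antipode \(p^*\) in place of \(p\). The plan has four steps: show \(C_s\) is compact and locally contractible, deduce that it is an ANR, show that \(C_s\) and \(C_s\setminus A\) are contractible, and conclude that \(C_s\) is an AR.

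\textbf{Compactness and the ANR property.} Compactness of \(C_s\) is clear, since it is a closed subset of the compact space \(L\). For local contractibility at an interior point \(z\in C_s\), meaning \(h(z)<s\), we use that \(L\) is locally contractible (Lemma~\ref{lem:link-pi1}) with contractions supported in small neighbourhoods. Such a contraction stays in \(h^{-1}([0,s))\) by continuity of \(h\).

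At a point \(z\in E_s\) we combine a small local contraction \(\Phi\) of \(L\) near \(z\) with the level-lowering motion \(T(\cdot,\tau)\) of Lemma~\ref{lem:level-deformation}. Given a small neighbourhood \(V\) of \(z\) in \(C_s\), we proceed in two stages.
\begin{enumerate}[label=(\alph*)]
\item Push \(V\) by \(T(\cdot,\tau\chi)\) for a small \(\tau<0\) and a cutoff \(\chi\). By \eqref{eq:sign}, every point moves into \(h^{-1}([0,s))\), and the displacement is at most \(2|\tau|\).
\item The trouble is that a contraction of \(L\) may re-enter \(h>s\). To avoid this, we instead use the homotopy \((x,u)\mapsto T(\Phi(x,u),\sigma(x,u))\), where the time \(\sigma\le 0\) is chosen continuously and large enough in absolute value to keep \(h\le s\).
\end{enumerate}
The uniform positivity of the decrease on compact sets, as in the proof of Lemma~\ref{lem:filtration}, makes this possible. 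The idea is that \(\Phi\) moves \(h\) by at most \(\omega\) (its oscillation bound), and \(T\) with a fixed \(\tau\) lowers \(h\) by at least \(\eta>0\) on \(h^{-1}([a_0,s+\omega])\). Shrinking neighbourhoods so that \(\omega<\eta\) then gives a contraction inside \(C_s\) of small diameter.

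Alternatively, we prove only the weaker property that small neighbourhoods in \(C_s\) are contractible in slightly larger ones. That is enough: a finite-dimensional compact space which is \(LC^\infty\) in this sense is an ANR (Hanner/Borsuk). Hence \(C_s\) is a compact ANR.

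\textbf{Contractibility of \(C_s\) and of \(C_s\setminus A\).} By Lemma~\ref{lem:filtration}, \(C_s\) deformation retracts up to homotopy onto \(C_a\) for small \(a\). For \(a<r\) small, \(C_a=L\cap B(p,a)\) contracts to \(p\): join each point to \(p\) by its unique segment and project radially to \(L\), exactly as in Thurston's local contractibility argument. This uses Krakus' second retraction inequality to stay inside \(B(p,a)\). Hence \(C_s\simeq *\), and being a contractible ANR, \(C_s\) is an AR.

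For a closed \(A\subset E_s\), let \(F\) be the map of Lemma~\ref{lem:filtration} with \(b=s\), \(\chi\equiv1\) on \([a,2r]\), \(a<s\), and let \(N\) be as there. The homotopy from the identity to \(F\) moves every point of \(E_s\) strictly below level \(s\) at all positive times, and fixes points with \(h<s\) in the sense that they stay in \(h<s\). Therefore it maps \((C_s\setminus A)\times(0,1]\) into \(C_s\setminus E_s\subset C_s\setminus A\), and at time \(0\) it is the identity. So \(C_s\setminus A\) deforms within itself into \(C_s\setminus A\) via \(F^N\), which lands in \(C_a\). Composing with the contraction of \(C_a\), which stays in \(h\le a<s\) and hence avoids \(A\), gives a contraction of \(C_s\setminus A\).

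\textbf{Expected obstacle.} The main obstacle is the boundary local contractibility in the first step: reconciling a local contraction of \(L\) with the constraint \(h\le s\). We handle it with the compactness/uniform-decrease trick: compose each stage of the contraction with a sufficiently long backwards \(T\)-push, with the push time depending continuously on the point.
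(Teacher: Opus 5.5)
Your proof is correct in substance. Your treatment of \(C_s\setminus A\) is the paper's: restrict the filtration deformation, note that points of \(E_s\setminus A\) drop strictly below \(s\) at every positive time, and observe that everything afterwards stays in \(\{h<s\}\). You differ from the paper in two places.

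At a boundary point \(z\in E_s\), the paper never contracts near \(z\). It pushes a small neighbourhood \(V\) by \(T(\cdot,u\tau)\), \(0\le u\le 1\), into a neighbourhood \(W_-\) of the interior point \(z_-=T(z,\tau)\) that is null-homotopic in the open set \(U\cap\{h<s\}\). Only joint continuity of \(T\) and the sign condition are needed. Your route also works: contract in \(L\) first, then compose with a fixed push \(T(\cdot,\tau)\) whose minimal decrease \(\eta\) dominates the oscillation \(\omega\) of \(h\) along the contraction. A constant \(\sigma=\tau\) suffices. The circularity in taking \(\eta\) on \(h^{-1}([a_0,s+\omega])\) with \(\omega<\eta\) is removed by fixing \(\tau\), then \(\omega_0\), computing \(\eta\) on \(h^{-1}([s,s+\omega_0])\), and only then choosing the contraction with \(\omega<\min\{\eta,\omega_0\}\). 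It costs a compactness estimate that the paper avoids.

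For global contractibility, you contract \(C_a=L\cap B(p,a)\) within itself by Thurston's geodesic contraction and the second retraction inequality. The paper instead uses only a neighbourhood of \(p\) that is null-homotopic in \(\{h<s\}\) and contains \(C_a\) for small \(a\).

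That second difference is where your claim that \(D_s\) is ``the same with \(p^*\) in place of \(p\)'' is too quick. Your contraction of \(C_a\) uses that \(C_a\) is a metric ball about the base point \(p\) of \(h\). The superlevel set \(D_b=\{x\in L:d(p,x)\ge b\}\) need not be a metric ball about \(p^*\) in a general \(G\)-space, and geodesic contraction toward \(p^*\) controls \(d(p^*,\cdot)\), not \(h=d(p,\cdot)\).

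You need to insert one step. By compactness of \(L\) and uniqueness of the antipode, \(D_b\subset L\cap B(p^*,a')\) once \(b\) is close enough to \(2r\). For small \(a'\), that ball contracts within itself by the retraction inequality with base point \(p^*\). It also lies in \(\{h\ge 2r-a'\}\subset\{h>s\}\), which is exactly what the contractions of \(D_s\) and of \(D_s\setminus A\) require. The paper's formulation via local contractibility of \(L\) at \(p^*\) is symmetric from the start and needs only the same compactness fact.
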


\begin{proof}
We first prove contractibility. For \(C_s\),  
choose a neighborhood \(N\) of \(p\)
whose inclusion in \(\{h<s\}\) is null-homotopic.  This is possible
by local contractibility of \(L\).  For sufficiently small \(0<a<s\),
we have \(C_a\subset N\).  Lemma~\ref{lem:filtration} deforms
\(C_s\) into \(C_a\), and the chosen null-homotopy then contracts
it in \(\{h<s\}\).
The same proof applies to \(D_s\): choose a neighborhood of \(p^*\)
contracting in \(\{h>s\}\), take \(s<b<2r\) sufficiently close to
\(2r\) that \(D_b\) lies in this neighborhood, and use the positive-time
deformation of Lemma~\ref{lem:filtration}.

For a closed set \(A\subset E_s\), restrict these deformations to the
complements of \(A\).  In the sublevel case, points initially below
\(s\) remain below \(s\), while points of \(E_s\setminus A\) move
strictly below \(s\) at every positive time of the first deformation
step.  The remaining deformation and contraction stay below \(s\).
The superlevel case is identical with the inequalities reversed.
This proves \eqref{eq:subsuper-complements}.

It remains to check local contractibility at a point \(z\in E_s\); away from \(E_s\) it follows from local
contractibility of \(L\). Let \(U\) be a relative open neighborhood of \(z\) in \(C_s\).  Choose \(\tau<0\) sufficiently close to zero that
\[
                         T(z,u\tau)\in U\qquad(0\leq u\leq1).
\]
The point \(z_-=T(z,\tau)\) lies in \(\{h<s\}\).  Choose a neighborhood \(W_-\)
of \(z_-\) whose inclusion in \(U\cap\{h<s\}\) is null-homotopic.
By continuity, there is a relative neighborhood \(V\) of \(z\) in \(C_s\) such that
\[
 T(V,u\tau)\subset U,\qquad T(V,\tau)\subset W_-.
\]
Concatenating this homotopy with the chosen null-homotopy of
\(W_-\hookrightarrow U\cap\{h<s\}\) proves local contractibility of \(C_s\).  The argument for \(D_s\) is the same, using
positive time.  Thus both sets are locally contractible and hence, being
finite-dimensional, are ANRs; being contractible, they are ARs.
\end{proof}

The assertion about \(C_s\) in the next proposition is the argument of \cite[Proposition~3.8]{Thurston},
with Mitchell's homology convention made explicit.  Thurston takes
\(s\leq r\) in the setup of \cite[Theorem~2.12]{Thurston}; the contraction
and local-homology calculation used there apply throughout \(0<s<2r\).
Indeed, for contraction toward \(p\), it is enough that \(C_s\) omit the
antipode \(p^*\), and our radius choice supplies all the required segments.

\begin{proposition}\label{prop:level-hm}
The sublevel set \(C_s\) and the superlevel set \(D_s\) are homology \((n-1)\)-manifolds with boundary in the
sense of Mitchell, and their intrinsic boundary is precisely \(E_s\).
Consequently \(E_s\) is a compact cohomologically locally connected
integral homology \((n-2)\)-manifold without boundary in the sense of
Mitchell. Furthermore, \(E_s\) has the Borel--Moore homology of
\(S^{n-2}\).
\end{proposition}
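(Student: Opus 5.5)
The plan is to verify Mitchell's definition of a homology manifold with boundary directly for \(C_s\) and \(D_s\), using the level-changing motion \(T\) of Lemma~\ref{lem:level-deformation}, and then apply Mitchell's boundary theorem \cite{Mitchell} to get the statement about \(E_s\). Since \(C_s\) and \(D_s\) are compact ARs by Proposition~\ref{prop:subsuper-ar}, they are finite-dimensional and locally contractible. Hence they are cohomologically locally connected, and singular and Borel--Moore (sheaf) homology agree on them and on their open subsets.

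\emph{Interior points.} A point \(x\) with \(h(x)<s\) (respectively \(>s\)) has a neighborhood in \(C_s\) (respectively \(D_s\)) that is open in \(L\). By Lemma~\ref{lem:link-pi1}, \(L\) is a homology \((n-1)\)-manifold, so the local homology there is that of \(\R^{n-1}\).

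\emph{Points of \(E_s\).} Fix \(z\in E_s\) and consider \(C_s\); the case of \(D_s\) is symmetric, with positive times. I will show that \(H_*(C_s,C_s\setminus\{z\})=0\), and more strongly that small neighborhoods \(V\) of \(z\) in \(C_s\) have \(V\setminus\{z\}\) acyclic relative to \(V\) in the direct-limit sense Mitchell requires. The argument is the one from local contractibility in Proposition~\ref{prop:subsuper-ar}. The motion \(u\mapsto T(\cdot,u\tau)\) deforms a neighborhood \(V\) of \(z\) within \(U\) into \(\{h<s\}\). Points of \(E_s\) leave \(E_s\) instantly, and points below \(s\) stay below \(s\), so the deformation never hits \(z\) after time \(0\). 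Then we contract \(T(V,\tau)\) inside \(U\cap\{h<s\}\setminus\{z\}\), which is possible by local contractibility of \(L\) at \(z_-\ne z\). Hence \(V\hookrightarrow U\) factors up to homotopy through \(U\setminus\{z\}\). Together with the analogous statement for \(V\setminus\{z\}\), this gives vanishing local homology, so \(z\) is a boundary point. This is the step I expect to be the main obstacle, specifically matching the precise form of Mitchell's axioms (local cohomology sheaf of the pair, orientation sheaf, clc conditions) with these homotopy-theoretic statements. I would handle this by passing through singular homology on ANRs and using the comparison with sheaf homology.

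It follows that the intrinsic boundary is exactly \(E_s\): its points have trivial local homology, while points below \(s\) have \(\Z\) in degree \(n-1\). Mitchell's boundary theorem then says that the boundary of a homology \((n-1)\)-manifold with boundary is a clc homology \((n-2)\)-manifold without boundary, which gives the second assertion. Compactness is clear because \(E_s\) is closed in \(L\).

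For the homology of \(E_s\), I will use the Mayer--Vietoris sequence for \(L=C_s\cup D_s\), \(C_s\cap D_s=E_s\), in Borel--Moore or Čech theory; all three sets are closed, so the sequence applies. The sets \(C_s\) and \(D_s\) are acyclic, being ARs, and \(L\) has the homology of \(S^{n-1}\). Therefore \(\tilde H_k(E_s)\cong \tilde H_{k+1}(L)\), which is \(\Z\) for \(k=n-2\) and \(0\) otherwise. As an alternative check, Lefschetz duality in \(C_s\) applies: \(H_k(E_s)\cong H_{k+1}(C_s,E_s)\cong H^{n-2-k}(C_s)\), which gives the same answer.
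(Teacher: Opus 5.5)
Your proposal is correct, but it departs from the paper at two points.

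\textbf{Boundary points.} At \(z\in E_s\) the paper argues globally. By \eqref{eq:subsuper-complements} with \(A=\{z\}\), both \(C_s\) and \(C_s\setminus\{z\}\) are contractible, so the long exact sequence of the pair gives \(H_*(C_s,C_s\setminus\{z\})=0\) at once. You argue locally, and your construction does work if you state it precisely. The motion \(T(\cdot,u\tau)\), followed by the contraction near \(z_-\), is a single homotopy of pairs. It runs from the inclusion \((V,V\setminus\{z\})\hookrightarrow(U,U\setminus\{z\})\) to a map into \(U\setminus\{z\}\), so this inclusion induces zero on relative homology. It is also an isomorphism by excision, so the local homology vanishes. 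Make that excision step explicit. Two separate null-homotopies of \(V\hookrightarrow U\) and \(V\setminus\{z\}\hookrightarrow U\setminus\{z\}\) would not by themselves kill the relative group.

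No direct-limit or sheaf condition is needed. Mitchell's definition is the pointwise Borel--Moore condition on \(H_*(X,X\setminus\{x\})\) together with finite cohomological dimension, which here comes from \(\dim_{\Z}C_s\le\dim C_s\le n-1\). Your argument is the deformation of Lemma~\ref{lem:subsuper-null} with \(A=\{z\}\), applied to all of \(V\), and it uses only the local motion. The paper's version is a one-line consequence of the global filtration in Lemma~\ref{lem:filtration}.

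\textbf{Global homology.} The paper uses only \(C_s\). It combines excision \(H_k^{\mathrm{BM}}(C_s,E_s)\cong H_k^{\mathrm{BM}}(W)\), Poincar\'e duality on the contractible (hence orientable) open homology manifold \(W=\{h<s\}\), and the sequence of the pair. It never needs the global homology of \(L\). Your Mayer--Vietoris argument for the closed cover \(L=C_s\cup D_s\) avoids duality and orientability. The price is that it uses:
\begin{itemize}
\item acyclicity of both \(C_s\) and \(D_s\);
\item the homology-sphere part of Lemma~\ref{lem:link-pi1};
\item a closed-cover Mayer--Vietoris sequence in Borel--Moore theory, or in \v{C}ech cohomology followed by universal coefficients (which works since the answer is free and finitely generated).
\end{itemize}
All of this is available, so it is a valid alternative.

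\textbf{Citation point.} Mitchell's boundary theorem gives only that \(E_s\) is a homology \((n-2)\)-manifold without boundary. The paper obtains the clc property separately, from Mitchell's Lemma~1, using first countability of the compact metric space \(E_s\).
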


\begin{proof}
We first consider \(C_s\). By Proposition~\ref{prop:subsuper-ar}, \(C_s\) is a compact
finite-dimensional metric AR.  In particular, it is locally compact
Hausdorff and first countable, and
\[
                 \dim_{\Z}C_s\leq\dim C_s\leq n-1.
\]
Mitchell uses Borel--Moore homology
\cite[p.~510]{Mitchell}.  Since \(C_s\) and
\(C_s\setminus\{z\}\) are locally contractible, the comparison
isomorphism of \cite[Corollary~V.13.6, p.~363]{Bredon} identifies
their singular relative homology with Borel--Moore relative homology
with compact supports.  We may therefore check Mitchell's local
homology condition using singular homology.  This comparison is used
for \(C_s\), not for \(E_s\).

At an interior point \(z\in C_s \setminus E_s\), excision identifies the local
homology of \(C_s\) with that of \(L\), hence with that of
\(\R^{n-1}\).  If \(z\in E_s\), both \(C_s\) and
\(C_s\setminus\{z\}\) are contractible by
Proposition~\ref{prop:subsuper-ar}; the ordinary long exact sequence of the
pair gives
\[
                    H_i(C_s,C_s\setminus\{z\};\Z)=0
                         \quad\text{for all }i.
\]
Thus, in every degree other than \(n-1\) the local group is zero, while
the group in degree \(n-1\) is either \(\Z\), at an interior point, or
zero, exactly at a point of \(E_s\).  This is precisely Mitchell's
definition of a homology \((n-1)\)-manifold with boundary over
\(\Z\), and its intrinsic boundary is \(E_s\).

Mitchell's theorem~\cite[Theorem, p.~510]{Mitchell} now says that this
boundary is a homology \((n-2)\)-manifold with empty boundary.  Since \(E_s\) is compact metrizable, it is first countable.
Applying \cite[Lemma~1, p.~510]{Mitchell} to \(E_s\) shows that
it is cohomologically locally connected.

For the global homology, we use \(C_s\) and its interior, as in the
proof of \cite[Proposition~3.9]{Thurston}.  In this calculation,
\(H_*^{\mathrm{BM}}\) denotes Borel--Moore homology with closed supports.
Set \(W=C_s\setminus E_s=\{h<s\}\) and \(q=n-1\).
By Proposition~\ref{prop:subsuper-ar}, both \(C_s\) and \(W\)
are contractible.  The open homology \(q\)-manifold \(W\) is therefore
orientable.  Excision for the closed subset \(E_s\subset C_s\)
and Poincar\'e duality give
\[
 H_k^{\mathrm{BM}}(C_s,E_s;\Z)
 \cong H_k^{\mathrm{BM}}(W;\Z)
 \cong H^{q-k}(W;\Z)
 \cong
 \begin{cases}
  \Z,& k=q,\\
  0,& k\ne q;
 \end{cases}
\]
see \cite[Corollary~V.5.10 and Theorem~V.9.2, pp.~312, 329]{Bredon}.
Here \(H^*\) denotes sheaf cohomology.  Since \(C_s\) is a compact
contractible ANR, the exact homology sequence of \((C_s,E_s)\) yields
\[
 H_k^{\mathrm{BM}}(E_s;\Z)\cong H_k(S^{n-2};\Z).
\]

The same argument applies to \(D_s\), with
\(D_s\setminus E_s=\{h>s\}\) in place of \(W\).  In particular,
\(D_s\) is also a homology \((n-1)\)-manifold with boundary \(E_s\),
and the global homology of \(E_s\) can equally be computed using
\(D_s\) and its interior.
\end{proof}

\begin{lemma}\label{lem:level-connected}
For every \(0<s<2r\), the level \(E_s\) is connected and locally path connected, and hence path connected.
\end{lemma}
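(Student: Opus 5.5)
Connectedness will come from the Borel--Moore homology computed in Proposition~\ref{prop:level-hm}, and local path connectedness from the level-changing motion of Lemma~\ref{lem:level-deformation}. Path connectedness then follows from the standard fact that a connected, locally path connected space is path connected.

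\emph{Connectedness.} Since \(n\geq3\), we have \(n-2\geq1\), so Proposition~\ref{prop:level-hm} gives \(H_0^{\mathrm{BM}}(E_s;\Z)\cong H_0(S^{n-2};\Z)\cong\Z\). Suppose \(E_s=P\sqcup Q\) with \(P,Q\) nonempty, closed and disjoint. Both are compact, and Borel--Moore homology is additive over such a clopen decomposition. Each nonempty compact piece contributes a nonzero \(H_0^{\mathrm{BM}}\), since it maps onto \(H_0\) of a point. Thus \(H_0^{\mathrm{BM}}(E_s)\) would have rank at least \(2\), a contradiction.

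An alternative, more elementary route avoids \(H_0^{\mathrm{BM}}\). Use Mayer--Vietoris on \(L=C_s\cup D_s\) with \(C_s\cap D_s=E_s\). Both pieces are ARs by Proposition~\ref{prop:subsuper-ar}, so they are contractible, and the pair is an excisive closed cover of ANRs. Then \(\tilde H_0(E_s)\) injects into \(\tilde H_1(L)\cong H_1(S^{n-1})=0\) for \(n\geq3\). Since \(E_s\) is a compact set and not known to be an ANR, I would run this argument in Čech cohomology, where Mayer--Vietoris holds for closed covers of compact spaces: \(\check H^0(E_s)\) is controlled by \(\check H^0\) of \(C_s\), \(D_s\) and \(\check H^1(L)=0\). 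This gives \(\check H^0(E_s)\cong\Z\), hence connectedness. I would present this Čech argument as the main one, since it uses only earlier results.

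\emph{Local path connectedness.} This step is the main obstacle, because \(E_s\) is not known to be locally contractible. Fix \(z\in E_s\) and a neighborhood \(U\) of \(z\) in \(L\). The idea is to connect nearby level points by paths that dip into \(\{h<s\}\) and come back up to \(E_s\). By local contractibility of \(L\), choose a path-connected neighborhood \(V\subset U\) of \(z\), small relative to \(\varepsilon_J\) for a compact \(J\ni s\).

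Given \(x,y\in E_s\cap V'\) for a smaller neighborhood \(V'\), join them by a path \(\gamma\) in \(V\). We then need to push \(\gamma\) onto \(E_s\) while staying near \(z\). The natural tool is a continuous "projection to the level," defined for \(w\) near \(E_s\) by moving along \(t\mapsto T(w,t)\) until \(h=s\). However, \(t\mapsto h(T(w,t))\) is not known to be monotone, so the hitting time is not continuous.

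My first attempt would instead use the \(\check H^0\) argument locally. Apply Mayer--Vietoris to small compact neighborhoods: take \(K=\) a compact contractible-in-\(U\) neighborhood and its pieces \(K\cap C_s\) and \(K\cap D_s\). Shrink the pieces via \(T\) toward \(\{h<s\}\) or \(\{h>s\}\), as in the proof of local contractibility in Proposition~\ref{prop:subsuper-ar}, to get null-homotopic inclusions. This yields that \(E_s\) is \(0\)-\(\mathrm{lc}\) in the Čech/homological sense: every small neighborhood contains a smaller one whose points lie in a single component of the bigger one.

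The fallback is the CLC property already proved in Proposition~\ref{prop:level-hm}, together with a finite-dimensional compact metric space being locally connected in that case. Local connectedness of a compact metric space then gives local path connectedness by the Mazurkiewicz--Moore theorem, since locally connected continua are locally arcwise connected.
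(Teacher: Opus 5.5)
Your proposal is correct. For local path connectedness, the route you finally commit to (your ``fallback'') is exactly the paper's. The degree-zero part of the clc property from Proposition~\ref{prop:level-hm} gives local connectedness (Bredon, Ch.~II, \S17; finite dimensionality plays no role), and Mazurkiewicz--Moore--Menger upgrades this to local arcwise connectedness. Your $H_0^{\mathrm{BM}}$ argument for connectedness is also the paper's, with one change. The paper invokes clc and Bredon's Theorem~V.5.14 to identify $H_0^{\mathrm{BM}}$ with the free group on components. You instead use additivity over a clopen splitting and the augmentation to a point.

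The genuinely different part is the \v{C}ech Mayer--Vietoris argument you make primary. The closed cover $L=C_s\cup D_s$ gives $\tilde{\check H}{}^0(E_s)\hookrightarrow\check H^1(L)=0$, using only Proposition~\ref{prop:subsuper-ar} and Lemma~\ref{lem:link-pi1}. Your homological phrasing has the arrow reversed: $\partial\colon H_1(L)\to\tilde H_0(E_s)$ is onto, and it is the cohomological map that is injective.

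Your local version of this argument can also be completed into an elementary proof of local connectedness, but it needs one more layer than you sketch:
\begin{itemize}
\item Take compact neighborhoods $K''\subset K'\subset K$ of $z$ with $K'\hookrightarrow K$ null-homotopic, so $\check H^1(K)\to\check H^1(K')$ vanishes.
\item Also require $K''\cap C_s\hookrightarrow K'\cap C_s$ and $K''\cap D_s\hookrightarrow K'\cap D_s$ to be null-homotopic within these sets. The local contractibility argument of Proposition~\ref{prop:subsuper-ar} supplies this.
\item Naturality of the sequence then shows that every locally constant function on $K\cap E_s$ is constant on $K''\cap E_s$.
\item Hence $K''\cap E_s$ lies in a single quasi-component of $K\cap E_s$, and by compactness in a single component. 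This is connectedness im kleinen at $z$.
\end{itemize}

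What your route buys is that Lemma~\ref{lem:level-connected} no longer depends on Mitchell's theorem or the clc machinery. The paper's route is shorter because Proposition~\ref{prop:level-hm} is needed anyway for the nonseparation argument in Lemma~\ref{lem:level-avoid}.
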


\begin{proof}
We first prove connectedness.  By Proposition~\ref{prop:level-hm},
\(E_s\) is compact and cohomologically locally connected, and
\(H_0^{\mathrm{BM}}(E_s;\Z)\cong\Z\).
By \cite[Theorem~V.5.14, p.~314]{Bredon}, this group is the free
abelian group on the connected components of \(E_s\).
Thus \(E_s\) is connected.

We next prove local path connectedness.  The degree-zero part of
cohomological local connectedness implies local connectedness;
see \cite[Chapter~II, \S17, p.~126]{Bredon}.
Since \(E_s\) is a compact metric space, it is complete.
The Mazurkiewicz--Moore--Menger theorem
\cite[\S50, Theorem~1, p.~254]{KuratowskiTopology}
therefore implies that \(E_s\) is locally arcwise connected.
In particular, \(E_s\) is locally path connected and, being connected,
is path connected.
\end{proof}

\section{Arcs in a distance level}\label{sec:arcs-in-a-distance}

The aim of this section is to prove that an embedded arc contained in
an exact distance level \(E_s\) can be avoided by maps of \(I^2\) after
arbitrarily small homotopies in \(L\).  From now on assume \(n\geq5\).
There are two ingredients.  First, Bredon's nonseparation theorem
\cite[Theorem~V.16.20]{Bredon}, together with local path connectedness,
gives arcwise connectedness of the complement of such an arc inside
connected open subsets of \(E_s\).
Second, the level-changing motion \(T\) from
Lemma~\ref{lem:level-deformation} allows paths to move between the two
components \(\{h<s\}\) and \(\{h>s\}\) in a controlled way.  Together
these facts give local path and loop control in \(L\setminus A\) near
\(A\).  Banakh--Cauty--Karassev's local-negligibility criterion
\cite[Theorem~3.2(1)]{BCK} then turns this complement control into
the homotopical \(Z_2\)-property.

\begin{definition}\label{def:homotopical-z2}
Following Banakh--Cauty--Karassev~\cite{BCK}, let \(M\) be a metric
ANR and let \(A\subset M\) be closed.  We call \(A\) a
\emph{homotopical \(Z_2\)-set} in \(M\) if, for every open cover
\(\mathcal U\) of \(M\) and every map \(f\colon I^2\to M\), there is a map
\[
                     f'\colon I^2\longrightarrow M\setminus A
\]
which is \(\mathcal U\)-homotopic to \(f\), in the sense fixed in Section~\ref{sec:introduction}.
\end{definition}

\begin{lemma}\label{lem:level-avoid}
Let \(A\subset E_s\) be an embedded arc.  If \(U\subset E_s\) is a nonempty
connected relative open set, then
\(U\setminus A\) is nonempty and arcwise connected.  Consequently, for every
\(a\in E_s\) and every relative neighborhood \(N\) of \(a\),
there is a connected relative open neighborhood \(U\), with
\(a\in U\subset N\), such that \(U\setminus A\) is nonempty and arcwise connected.
\end{lemma}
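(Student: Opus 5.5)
The plan is to apply Bredon's nonseparation theorem \cite[Theorem~V.16.20]{Bredon} to the homology \((n-2)\)-manifold \(E_s\) supplied by Proposition~\ref{prop:level-hm}. Then we upgrade the resulting connectedness to arcwise connectedness using local path connectedness (Lemma~\ref{lem:level-connected}).

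\textbf{Nonseparation.} Let \(U\subset E_s\) be a nonempty connected relative open set. Then \(U\) is itself an (open) homology \((n-2)\)-manifold in Mitchell's sense. It is also cohomologically locally connected, since that property is local and open subsets inherit it. The arc \(A\) is closed, and \(A\cap U\) is closed in \(U\) with covering dimension at most \(1\). Hence \(\dim_{\Z}(A\cap U)\le 1\le (n-2)-2\) because \(n\geq5\). Bredon's theorem says that a closed subset of cohomological dimension at most \(m-2\) does not separate a connected homology \(m\)-manifold. Applying it with \(m=n-2\) gives that \(U\setminus A\) is connected.

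\textbf{Nonemptiness.} This is where some care is needed. If \(U\subset A\), then \(U\) would be an open subset of \(E_s\) of dimension at most \(1\). But a nonempty open subset of a homology \((n-2)\)-manifold has cohomological dimension \(n-2\ge3\), since its local homology is nonzero in degree \(n-2\). This forces \(U\setminus A\neq\varnothing\). Alternatively, Bredon's comparison theorem \cite[Theorem~V.16.8]{Bredon} gives that \(A\cap U\) has empty interior in \(U\). I would cite whichever form is cleanest.

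\textbf{Upgrading to arcwise connected.} \(U\setminus A\) is an open subset of \(E_s\), because \(A\) is closed. Since \(E_s\) is locally path connected (indeed locally arcwise connected, by Lemma~\ref{lem:level-connected}), \(U\setminus A\) is locally arcwise connected. A connected, locally path connected space is path connected. A path-connected Hausdorff space is arcwise connected, since any path between distinct points contains an arc joining them. Alternatively, apply local arcwise connectedness directly: the arc-components are open, and connectedness forces a single one.

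\textbf{The consequence.} Given \(a\in E_s\) and a relative neighborhood \(N\), use local connectedness of \(E_s\) (again Lemma~\ref{lem:level-connected}). Take \(U\) to be the connected component of \(a\) in the interior of \(N\). This \(U\) is open because \(E_s\) is locally connected, and it contains \(a\). The first part then applies to \(U\).

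The main obstacle is checking that the hypotheses of Bredon's Theorem V.16.20 hold in the Borel--Moore/sheaf setting: orientability is not needed for nonseparation, but the cohomological local connectedness of \(U\) and the bound \(\dim_{\Z}A\le1\) must be stated in Bredon's terms. The latter follows because covering dimension bounds \(\dim_{\Z}\) for metrizable spaces.
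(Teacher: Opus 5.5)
Your proposal is correct and follows the paper's proof. The steps match: Bredon's nonseparation theorem V.16.20 applied to a connected open \(U\subset E_s\) with \(\dim_{\Z}(A\cap U)\le 1\le m-2\), a dimension argument for nonemptiness, local path connectedness to upgrade connectedness to arcwise connectedness, and local connectedness for the final clause. The one step to make explicit is the translation you flagged. The paper uses connectedness, clc, finite \(\dim_{\Z}\), and the Borel--Moore local homology to verify condition (d) of Bredon's Theorem V.16.8, so that \(E_s\), and hence \(U\), is a cohomology \(m\)-manifold over \(\Z\); Corollary V.16.18 and Theorem V.16.20 are then applied to that cohomology manifold. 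So V.16.8 belongs in this translation step, not in the nonemptiness step where you cited it.
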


\begin{proof}
Set \(m=n-2\).  By Proposition~\ref{prop:level-hm} and
Lemma~\ref{lem:level-connected}, \(E_s\) is connected and
cohomologically locally connected, has finite cohomological
dimension over \(\Z\), and satisfies the Borel--Moore local
homology condition of an \(m\)-manifold.  Thus it satisfies
condition~(d) of \cite[Theorem~V.16.8, p.~375]{Bredon}.
By that theorem, \(E_s\) is a cohomology \(m\)-manifold over
\(\Z\).

Let \(U\subset E_s\) be a nonempty connected relative open set.
Then \(U\) is also a cohomology \(m\)-manifold.  The set
\(A\cap U\) is closed in \(U\), and
\[
   \dim_{\Z}(A\cap U)\leq\dim(A\cap U)\leq1\leq m-2.
\]
Moreover, \(\dim_{\Z}U=m\) by
\cite[Corollary~V.16.18, p.~383]{Bredon}, so \(U\) cannot be
contained in \(A\).  Hence \(U\setminus A\) is nonempty, and
\cite[Theorem~V.16.20, p.~383]{Bredon} shows that it is connected.
It is open in the locally path-connected space \(E_s\), by
Lemma~\ref{lem:level-connected}, and is therefore path connected
and hence arcwise connected.

Finally, local connectedness of \(E_s\) gives, for every
\(a\in E_s\) and every relative neighborhood \(N\) of \(a\), a
connected relative open neighborhood \(U\) with
\(a\in U\subset N\).  The preceding argument applies to this \(U\).
\end{proof}

\begin{lemma}\label{lem:subsuper-null}
Let \(A\subset E_s\) be closed and let \(a\in A\).  For every
neighborhood \(U\) of \(a\) in \(L\), there is a neighborhood \(V\)
of \(a\) such that the inclusions
\[
 (V\cap C_s)\setminus A\hookrightarrow U\setminus A,\qquad
 (V\cap D_s)\setminus A\hookrightarrow U\setminus A
\]
are null-homotopic.  The first null-homotopy may be chosen in
\((U\cap C_s)\setminus A\), and the second in
\((U\cap D_s)\setminus A\).
\end{lemma}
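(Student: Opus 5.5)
We treat the sublevel case; the superlevel case is symmetric, with positive times in place of negative ones. The model is the local-contractibility argument at the end of Proposition~\ref{prop:subsuper-ar}. The difference is that we now deform a whole neighborhood of \(a\) in \(C_s\) strictly below the level \(s\), so that \(A\subset E_s\) is automatically avoided, and then contract inside a small contractible set below the level.

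First fix a compact interval \(J\subset(0,2r)\) containing \(s\) in its interior, together with the motion \(T\) on \(h^{-1}(J)\times(-\varepsilon_J,\varepsilon_J)\) from Lemma~\ref{lem:level-deformation}. We may shrink \(U\) so that \(U\subset h^{-1}(\operatorname{int}J)\); this is allowed because \(a\in E_s\). By continuity of \(T\) at \((a,0)\), choose \(\tau<0\) with \(|\tau|<\varepsilon_J\) and a neighborhood \(V_0\) of \(a\) such that \(T(V_0\times[\tau,0])\subset U\).

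Next set \(z_-=T(a,\tau)\). By \eqref{eq:sign}, \(h(z_-)<s\), so \(z_-\in U\cap\{h<s\}\), which is an open subset of \(L\). Local contractibility of \(L\) (Lemma~\ref{lem:link-pi1}) then gives a neighborhood \(W_-\) of \(z_-\) whose inclusion into \(U\cap\{h<s\}\) is null-homotopic. Using continuity again, choose \(V\subset V_0\) with \(T(V,\tau)\subset W_-\).

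Now define the homotopy on \((V\cap C_s)\setminus A\) by \(z\mapsto T(z,u\tau)\) for \(u\in[0,1]\), followed by the null-homotopy of \(W_-\). We must check that it stays in \((U\cap C_s)\setminus A\).

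\begin{itemize}
\item At \(u=0\) the map is the inclusion, whose image lies in \((U\cap C_s)\setminus A\).
\item For \(u>0\), \eqref{eq:sign} gives \(h(T(z,u\tau))<h(z)\leq s\). So the track lies in \(\{h<s\}\), which is disjoint from \(E_s\supset A\).
\item The track stays in \(U\) by the choice of \(V_0\).
\item The final contraction takes place in \(U\cap\{h<s\}\subset (U\cap C_s)\setminus A\).
\end{itemize}

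Hence the inclusion \((V\cap C_s)\setminus A\hookrightarrow U\setminus A\) is null-homotopic through \((U\cap C_s)\setminus A\). Joint continuity of the concatenated homotopy follows from the joint continuity of \(T\).

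The main point requiring care is the first bullet for \(u>0\), including points of \(E_s\setminus A\) that start exactly at level \(s\). Here the strict inequality in \eqref{eq:sign}, rather than mere monotonicity, is what pushes them off \(A\) immediately. A second point is that \(\tau\) and \(V\) must be chosen uniformly near \(a\), and continuity of \(T\) on the compact set \(\{a\}\times[\tau,0]\) provides this.
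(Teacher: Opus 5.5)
Your proposal is correct and follows the paper's proof essentially step for step. You push \((V\cap C_s)\setminus A\) along \(T(\cdot,u\tau)\) with \(\tau<0\), use the strict inequality \eqref{eq:sign} to leave \(E_s\supset A\) immediately, and then contract inside a small neighborhood \(W_-\) of \(T(a,\tau)\) in \(U\cap\{h<s\}\); your explicit choice of \(J\) and shrinking of \(U\) into \(h^{-1}(\operatorname{int}J)\) usefully pins down the domain of \(T\), which the paper leaves implicit.
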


\begin{proof}
The argument is almost identical to the local contractibility proof in
Proposition~\ref{prop:subsuper-ar}.  Here the homotopies are restricted to
\((V\cap C_s)\setminus A\) and \((V\cap D_s)\setminus A\) near \(a\in A\). We prove the assertion for \(C_s\); the proof for \(D_s\) is the same
with the sign reversed.  Choose \(\tau<0\) sufficiently close to zero so that the path \(u\mapsto T(a,u\tau)\), \(0\leq u\leq1\), is contained in \(U\).  Its endpoint \(a_-=T(a,\tau)\) satisfies
\(h(a_-)<s\).  Since \(\{h<s\}\) is open and locally contractible,
choose a neighborhood \(W_-\) of \(a_-\) whose inclusion in
\(U\cap\{h<s\}\) is null-homotopic.  By the joint continuity of
\(T\), after shrinking a neighborhood \(V\) of \(a\) we have
\[
 T(x,u\tau)\in U\cap C_s\quad(0\leq u\leq1),
 \qquad T(x,\tau)\in W_-
\]
for every \(x\in V\cap C_s\).  If \(x\notin A\), then this path starts outside \(A\); for every \(u>0\), the strict inequality
\eqref{eq:sign} gives \(h(T(x,u\tau))<s\), so the remainder of the
path also misses \(A\subset E_s\).  This deforms
\((V\cap C_s)\setminus A\) into \(W_-\), after which the chosen
null-homotopy contracts it inside \(U\setminus A\).
\end{proof}

\begin{lemma}\label{lem:crossing}
Let \(z\in E_s\).  For every neighborhood \(U\) of \(z\)
in \(L\), there is a neighborhood \(V\) of \(z\), with
\(V\subset U\), such that:
\begin{enumerate}[label=(\roman*)]
\item any two points of \(V\cap\{h<s\}\) can be joined by a path in
      \(U\cap\{h<s\}\);
\item any two points of \(V\cap\{h>s\}\) can be joined by a path in
      \(U\cap\{h>s\}\);
\item if \(x\in V\cap\{h<s\}\) and \(y\in V\cap\{h>s\}\), then
      \(x\) and \(y\) can be joined by a path in \(U\) whose
      intersection with \(E_s\) is exactly \(\{z\}\) at one parameter.
\end{enumerate}
\end{lemma}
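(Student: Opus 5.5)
The plan is to use the level-changing motion \(T\) of Lemma~\ref{lem:level-deformation} at the single point \(z\). Together with the local null-homotopies of Lemma~\ref{lem:subsuper-null}, it will route all three kinds of paths.

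\emph{Crossing path through \(z\).} Shrink \(U\) to an open set and fix \(\tau>0\) so small that the path \(\gamma(u)=T(z,u)\), \(u\in[-\tau,\tau]\), lies in \(U\). By \eqref{eq:sign}, \(h(\gamma(u))<s\) for \(u<0\), \(h(\gamma(u))>s\) for \(u>0\), and \(\gamma(0)=z\). Thus \(\gamma\) meets \(E_s\) exactly once, at \(z\), and its endpoints \(z_\pm=T(z,\pm\tau)\) lie in the open sets \(U\cap\{h>s\}\) and \(U\cap\{h<s\}\) respectively.

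\emph{Choice of \(V\).} Apply Lemma~\ref{lem:subsuper-null} with \(A=\{z\}\), which is closed in \(E_s\), and with \(U\). This gives a neighborhood \(V_1\) of \(z\) such that \((V_1\cap C_s)\setminus\{z\}\hookrightarrow (U\cap C_s)\setminus\{z\}\) is null-homotopic, and similarly for \(D_s\). Rather than only citing the statement, I will reuse its proof, because it gives more. There, each \(x\in V_1\cap C_s\) is first pushed by \(u\mapsto T(x,u\tau')\) with \(\tau'<0\); for \(u>0\) this path lies strictly in \(\{h<s\}\), and if \(h(x)<s\) it lies there from the start. It then enters a neighborhood \(W_-\) whose inclusion into \(U\cap\{h<s\}\) is null-homotopic. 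Take \(W_-\) path connected; this is possible since \(L\) is locally path connected, being an ANR. Then any two points of \(V_1\cap\{h<s\}\) are joined by a path in \(U\cap\{h<s\}\): push each into \(W_-\) and join inside \(W_-\). This proves (i), and (ii) follows symmetrically with \(\tau'>0\). I will arrange the basepoint so that \(z_-\in W_-\), either by taking \(\tau'=-\tau\) so that \(W_-\) is a neighborhood of \(z_-=T(z,\tau')\) itself, or simply by joining within \(W_-\). Likewise \(z_+\in W_+\). Finally, let \(V=V_1\cap V_2\subset U\), where \(V_2\) comes from the superlevel version.

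\emph{Assembling (iii).} Given \(x\in V\cap\{h<s\}\) and \(y\in V\cap\{h>s\}\), concatenate three pieces: a path from \(x\) to \(z_-\) inside \(U\cap\{h<s\}\), obtained as in (i) by pushing into \(W_-\ni z_-\); then \(\gamma\) traversed from \(z_-\) to \(z_+\); then a path from \(z_+\) to \(y\) inside \(U\cap\{h>s\}\). The first and last pieces avoid \(E_s\) entirely, so the concatenation meets \(E_s\) exactly at the single parameter where \(\gamma\) passes through \(z\).

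The main obstacle is the bookkeeping that keeps every auxiliary path strictly off \(E_s\). The delicate point is that paths starting at points with \(h<s\) must never touch \(E_s\). This is supplied by the strict sign property \eqref{eq:sign}, applied for every \(u\neq0\) and with the starting point already off \(E_s\), together with the fact that \(W_\pm\) can be chosen inside the open sets \(\{h\lessgtr s\}\). No facts about \(E_s\) beyond its definition are needed here.
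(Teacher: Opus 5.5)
Your proposal is correct and follows essentially the paper's route: (i) and (ii) come from the push-then-contract construction of Lemma~\ref{lem:subsuper-null}, and (iii) concatenates those paths with the crossing path \(t\mapsto T(z,t)\), using \eqref{eq:sign}. The differences are only bookkeeping. The paper applies Lemma~\ref{lem:subsuper-null} with \(A=E_s\) rather than \(A=\{z\}\), so (i) and (ii) follow directly from its statement without reopening the proof. It then chooses \(\tau\) after \(V\) so that \(z_\pm\in V\); your choice \(\tau'=\mp\tau\) achieves the same end, and your alternative of ``simply joining within \(W_-\)'' is not justified as stated but is also not needed.
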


\begin{proof}
Apply Lemma~\ref{lem:subsuper-null} with \(A=E_s\).
The last assertion of that lemma gives a neighborhood \(V\subset U\)
of \(z\) satisfying (i) and (ii).
Choose \(\tau>0\) so small that \(T(z,t)\in V\) for
\(-\tau\leq t\leq\tau\), and let
\(z_-=T(z,-\tau)\), \(z_+=T(z,\tau)\).
For (iii), join \(x\) to \(z_-\) in \(U\cap\{h<s\}\) by (i),
follow \(t\mapsto T(z,t)\) from \(z_-\) to \(z_+\), and join
\(z_+\) to \(y\) in \(U\cap\{h>s\}\) by (ii).
By \eqref{eq:sign}, the resulting path meets \(E_s\) at exactly
one parameter, with value \(z\).
\end{proof}

\begin{lemma}\label{lem:finite-crossings}
Let \(K\subset L\) be compact, and let \(U_0\subset L\) be an open
neighborhood of \(K\).  There is an open neighborhood \(N\) of \(K\),
with \(N\subset U_0\), such that every path \(\alpha\colon I\to N\)
is homotopic in \(U_0\) to a path \(\alpha'\) for which
\((\alpha')^{-1}(E_s)\) is finite and contained in \((0,1)\).
At each such parameter the path passes from \(\{h<s\}\) to \(\{h>s\}\),
or conversely, and its value belongs to \(K\cap E_s\).
For loops the construction is made cyclic.
\end{lemma}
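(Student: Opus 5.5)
The plan is a compactness-and-subdivision argument built on Lemma~\ref{lem:crossing}. For each \(z\in K\cap E_s\), choose a neighborhood \(U_z\subset U_0\) of \(z\), and then a smaller neighborhood \(V_z\subset U_z\) as in Lemma~\ref{lem:crossing}. I will also arrange that any two points of \(V_z\) are joined by a path in \(U_z\), using local path connectedness of \(L\), which is an ANR. For \(z\in K\setminus E_s\), choose a path-connected neighborhood \(V_z\) whose closure lies in \(U_0\setminus E_s\). Cover \(K\) by finitely many such sets, with centers \(z_1,\dots,z_k\), and let \(N\) be their union. Next take a Lebesgue number \(\lambda\) for the cover \(\{V_{z_j}\}\) restricted to a compact neighborhood of \(K\). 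This step needs care, because \(N\) itself need not be compact and the Lebesgue number must apply to paths in \(N\). So I will shrink \(N\) to the set of points at distance less than \(\lambda/2\) from \(K\), where \(\lambda\) is a Lebesgue number for the finite cover of the compact set \(K\). Then every set of diameter less than \(\lambda/2\) meeting \(N\) lies in some \(V_{z_j}\).

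Given a path \(\alpha\colon I\to N\), I will subdivide \(I\) as \(0=t_0<\dots<t_m=1\), fine enough that each \(\alpha([t_{i-1},t_i])\) has diameter less than \(\lambda/2\), and hence lies in some \(V_i=V_{z_{j(i)}}\). The endpoints \(\alpha(t_i)\) may lie in \(E_s\). Using Lemma~\ref{lem:level-deformation}, I will first move each \(\alpha(t_i)\) slightly off \(E_s\), say along \(u\mapsto T(\alpha(t_i),u)\). This short segment is inserted as a back-and-forth detour, so the modification is a homotopy supported in \(V_i\cap V_{i+1}\), and it keeps the path in \(N\) and in the same chain of sets. The endpoints \(\alpha(0)\) and \(\alpha(1)\) are handled the same way: the homotopy is free at the endpoints, or equivalently the inserted tails lie off \(E_s\), so that \((\alpha')^{-1}(E_s)\subset(0,1)\). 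After this step, every subdivision point maps into \(\{h<s\}\cup\{h>s\}\).

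Now I replace each piece \(\alpha|_{[t_{i-1},t_i]}\) by a path \(\beta_i\) with the same endpoints and with controlled crossings of \(E_s\). If \(z_{j(i)}\in E_s\), let \(\beta_i\) be the path given by Lemma~\ref{lem:crossing}. When the endpoints lie on the same side, (i) or (ii) gives a path in \(U_{z_{j(i)}}\) that misses \(E_s\). When they lie on opposite sides, (iii) gives a path that crosses \(E_s\) exactly once, at \(z_{j(i)}\in K\cap E_s\), passing from one side to the other. If \(z_{j(i)}\notin E_s\), let \(\beta_i\) be a path inside \(V_{z_{j(i)}}\), which misses \(E_s\). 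In each case the original piece and \(\beta_i\) both lie in \(U_{z_{j(i)}}\). I will choose the \(U_z\) to be contractible in \(U_0\), or at least to have the property that two paths with common endpoints in \(V_z\) are homotopic rel endpoints in \(U_0\); this uses local contractibility of \(L\) by shrinking \(V_z\) inside a neighborhood that contracts in \(U_z\). Then the piece and \(\beta_i\) are homotopic rel endpoints in \(U_0\). Concatenating the \(\beta_i\) gives \(\alpha'\), homotopic to \(\alpha\) in \(U_0\), whose crossings are finite, transverse in the stated sense, and located at points of \(K\cap E_s\). For loops, I will index the subdivision cyclically and require that the moved basepoint be off \(E_s\).

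I expect the main obstacle to be the homotopy-in-\(U_0\) requirement combined with the need for disjoint crossings. Lemma~\ref{lem:crossing} only produces paths and gives no homotopies, so I must build in the nested choice \(V_z\subset W_z\subset U_z\). Here \(W_z\hookrightarrow U_z\) is null-homotopic, and \(V_z\) is the neighborhood obtained by applying Lemma~\ref{lem:crossing} to \(W_z\) rather than to \(U_z\). That choice ensures the replacement paths stay in a set where homotopy rel endpoints is guaranteed.
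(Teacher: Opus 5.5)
Your proposal is correct and follows essentially the same route as the paper. The paper uses the same ingredients: a finite cover of $K$ by nested neighborhoods $V_z\subset W_z$ with $W_z$ null-homotopic in $U_0$, where for $z\in E_s$ the set $V_z$ comes from Lemma~\ref{lem:crossing} applied to $W_z$. It then takes a subdivision adapted to this cover, pushes vertex images off $E_s$ with $T$ inside the relevant intersections, replaces each piece by a one-sided or single-crossing path, and fills each resulting loop in $U_0$.

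The only superfluous step is the uniform Lebesgue-number shrinking of $N$. For each individual path, the open cover $\{\alpha^{-1}(V_{z_j})\}$ of $I$ already gives the required subdivision, which is what the paper does. If you keep the shrinking, $\lambda$ must be chosen so that each ball $U(k,\lambda)$, $k\in K$, lies in some $V_{z_j}$. A Lebesgue number of the induced cover of $K$ alone is not enough.
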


\begin{proof}
For each \(q\in K\), use local contractibility of \(L\) to choose open
neighborhoods
\[
                         q\in V_q\subset W_q\subset U_0
\]
such that \(W_q\hookrightarrow U_0\) is null-homotopic.
If \(q\notin E_s\), choose \(W_q\) in the same open side of \(E_s\)
as \(q\), and choose \(V_q\) path connected.  If \(q\in E_s\), shrink
\(V_q\) so that Lemma~\ref{lem:crossing} applies with \(U=W_q\).
Choose finitely many \(V_q\)'s covering \(K\), and let
\(N=\bigcup_q V_q\).

Subdivide the parameter interval so that the image of each subinterval
under \(\alpha\) lies in a single \(V_q\).  At each subdivision vertex
whose image is in \(E_s\), move that image a small distance off \(E_s\)
using \(T\).  Each vertex track can be chosen in the intersection of the
\(V_q\)'s assigned to its incident subintervals. Thus the new endpoints of each subinterval
lie in the same \(V_q\) as its original subpath.

When \(q\notin E_s\), join the new endpoints in the path-connected set
\(V_q\).  When \(q\in E_s\), apply Lemma~\ref{lem:crossing} in \(W_q\).
If both endpoints lie on the same side, the replacement misses \(E_s\).
Otherwise use the crossing path constructed there, which meets \(E_s\)
at exactly one parameter, with value \(q\in K\cap E_s\).
For each subinterval, its original subpath, the replacement, and the two
vertex tracks form a loop in \(W_q\).  This loop is null-homotopic in
\(U_0\), so it extends over the homotopy square.  The extensions agree
on the prescribed vertex tracks and glue to the required homotopy.
For a loop, use the same track at the initial and terminal vertex.
\end{proof}

The point of the preceding lemma is that no regularity of the original
intersection \(\alpha^{-1}(E_s)\) is assumed.  It may, for example, be
infinite or contain a Cantor set.  The argument uses only the local
crossing lemma and local contractibility of \(L\).

\begin{proposition}\label{prop:local-complement}
Let \(A\subset E_s\) be an embedded arc.  For every \(a\in A\) and
every neighborhood \(U\) of \(a\) in \(L\), there is a neighborhood
\(V\) of \(a\) such that:
\begin{enumerate}[label=(\roman*)]
\item any two points of \(V\setminus A\) can be joined by a path in
      \(U\setminus A\);
\item every loop in \(V\setminus A\) is null-homotopic in
      \(U\setminus A\).
\end{enumerate}
\end{proposition}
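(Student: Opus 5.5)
We work with a nested chain of neighborhoods of \(a\) and combine three earlier results: Lemma~\ref{lem:finite-crossings}, Lemma~\ref{lem:subsuper-null}, and Lemma~\ref{lem:level-avoid}.

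\emph{Choosing the neighborhoods.}
\begin{itemize}
\item Start from \(U\). Use Lemma~\ref{lem:subsuper-null} to pick \(U_1\subset U\) such that \((U_1\cap C_s)\setminus A\) and \((U_1\cap D_s)\setminus A\) contract in \(U\setminus A\).
\item Choose a connected relative open set \(O\subset E_s\cap U_1\) containing \(A\cap \overline{V_2}\) locally. Lemma~\ref{lem:level-avoid} makes \(O\setminus A\) arcwise connected. Since \(A\) is an arc, we work near the single point \(a\): take \(O\ni a\) with \(O\setminus A\) arcwise connected.
\item Pick \(U_2\subset U_1\) with \(U_2\cap E_s\subset O\).
\item Apply Lemma~\ref{lem:finite-crossings} with \(K=\overline{B}\), a small closed ball about \(a\) inside \(U_2\), and \(U_0=U_2\), to get \(N\). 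Set \(V\subset N\cap \mathrm{int}\,K\).
\item Shrink \(V\) further using Lemma~\ref{lem:crossing}.
\end{itemize}

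\emph{Proof of (i).} Take \(x,y\in V\setminus A\). By Lemma~\ref{lem:crossing}, join them by a path in \(U_2\). Lemma~\ref{lem:finite-crossings} replaces it, within \(U_2\), by a path with finitely many transverse crossings of \(E_s\). Each crossing value lies in \(E_s\cap U_2\subset O\) and may lie in \(A\).

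To remove a crossing at \(q\in A\), use the arcwise connectedness of \(O\setminus A\) to pick \(q'\in O\setminus A\) close to \(q\). Reroute the crossing as follows: run along the \(C_s\)-side to near \(q'\), cross at \(q'\) using the crossing path of Lemma~\ref{lem:crossing} centered at \(q'\), then return on the \(D_s\)-side.

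The key point is that the two side-pieces connecting the old route to the new one lie in \((U_1\cap C_s)\setminus A\) and \((U_1\cap D_s)\setminus A\). To build these connections we push points of \(O\setminus A\) slightly off \(E_s\) via \(T\): a path in \(O\setminus A\) from \(q'\) toward a point near \(q\), pushed by \(T(\cdot,\mp\tau)\), gives a path in \(\{h\lessgtr s\}\). These pushed paths stay off \(A\) by \eqref{eq:sign}. After this modification every crossing lies at a point of \(O\setminus A\), and the whole path lies in \(U\setminus A\).

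\emph{Proof of (ii).} Take a loop \(\gamma\) in \(V\setminus A\).
\begin{enumerate}
\item Make \(\gamma\) cyclically finite-crossing via Lemma~\ref{lem:finite-crossings}. This homotopy may pass through \(A\), so instead perform the crossing normalization directly in \(V\setminus A\). Because \(\gamma\) avoids \(A\), its intersections with \(E_s\) lie in \(E_s\setminus A\), and each local replacement square can be chosen to avoid \(A\).
\item The loop is now a concatenation \(\beta_1\gamma_1\beta_2\cdots\), where the \(\beta_i\) are arcs alternately in the \(C_s\)-side and \(D_s\)-side and the \(\gamma_i\) are short crossings at points \(q_i\in O\setminus A\).
\item Join consecutive crossing points by paths \(\sigma_i\) in \(O\setminus A\), and push them by \(T\) to parallel copies \(\sigma_i^\pm\) on each side.
\item Decompose the loop into small loops of two kinds.
 \begin{itemize}
 \item Loops entirely in \((U_1\cap C_s)\setminus A\) or \((U_1\cap D_s)\setminus A\). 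These are null in \(U\setminus A\) by Lemma~\ref{lem:subsuper-null}.
 \item Thin "ladder" loops formed by \(\sigma_i^+\), \(\sigma_i^-\) and crossings. These bound the square \((w,t)\mapsto T(\sigma_i(w),t)\), which misses \(A\) since \(A\subset E_s\) and \(\sigma_i\) misses \(A\) at \(t=0\).
 \end{itemize}
\end{enumerate}

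\emph{Main obstacle.} The hardest step is making the finite-crossing normalization respect \(A\). Lemma~\ref{lem:finite-crossings} as stated only controls homotopies in \(U_0\), not in \(U_0\setminus A\). I would first try to redo its proof with \(L\setminus A\) in place of \(L\). Local contractibility of \(L\setminus A\) away from \(A\) is automatic. Near \(E_s\setminus A\), Lemma~\ref{lem:crossing} applied in a neighborhood missing \(A\) suffices, since \(A\) is closed.
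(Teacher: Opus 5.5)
Your proof is correct. For (ii) it is essentially the paper's argument: your \(U_1\), \(O\), \(U_2\) play the roles of the paper's \(W\), \(W_s\), and the condition \(V\cap E_s\subset W_s\). Your proposed fix for the ``main obstacle'' is also exactly what the paper does. No reproof is needed, though: Lemma~\ref{lem:finite-crossings} allows an arbitrary open \(U_0\supset K\), so the paper applies it with \(K=\gamma(I)\) and \(U_0=W\setminus A\) (open because \(A\) is compact), and the crossing values land in \(K\cap E_s\subset W_s\setminus A\). Two parts of your route are heavier than necessary. In (ii), the ladder squares \((w,t)\mapsto T(\sigma_i(w),t)\) can be dropped. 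Points of \(W_s\setminus A\) already lie in both domains \((W\cap C_s)\setminus A\) and \((W\cap D_s)\setminus A\) of Lemma~\ref{lem:subsuper-null}. So the paper closes each maximal excursion \(\alpha\) into \(\{h>s\}\) directly by an arc \(\beta\subset W_s\setminus A\), contracts \(\alpha\cdot\beta^{-1}\) in \(U\setminus A\), and is left with a loop in \((W\cap C_s)\setminus A\). In (i), the crossing normalization and the rerouting are unnecessary altogether. A null-homotopy of an inclusion \(Y\hookrightarrow Z\) joins any two points of \(Y\) by a path in \(Z\), and every point of \(V\setminus A\) lies in one of the two domains above. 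Any \(z\in W_s\setminus A\) lies in both, so every point of \(V\setminus A\) can be joined to \(z\) in \(U\setminus A\). Your ``key point'' sentence is precisely this observation. Once it is made, the rest of your (i) falls away, including the loose step of applying Lemma~\ref{lem:crossing} to points of \(V\setminus A\) that may lie on \(E_s\setminus A\), where that lemma says nothing.
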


\begin{proof}
Apply Lemma~\ref{lem:subsuper-null} to choose an open neighborhood \(W\) of
\(a\) such that both inclusions
\[
 (W\cap C_s)\setminus A\hookrightarrow U\setminus A,
 \qquad
 (W\cap D_s)\setminus A\hookrightarrow U\setminus A
\]
are null-homotopic.  By Lemma~\ref{lem:level-avoid}, choose a connected relative open
neighborhood \(W_s\) of \(a\) in \(E_s\), with
\(W_s\subset E_s\cap W\), such that \(W_s\setminus A\) is nonempty and arcwise connected.
Shrink \(V\) so that
\(\overline V\subset W\) and 
\(V\cap E_s\subset W_s\).

For (i), choose \(z\in W_s\setminus A\).  This point belongs to both
\((W\cap C_s)\setminus A\) and \((W\cap D_s)\setminus A\).
Each of the two null-homotopies above joins any two points of its domain
by a path in \(U\setminus A\).  Every point of \(V\setminus A\) belongs
to at least one of these domains, so it can be joined to \(z\) there.
This proves (i).

For (ii), let \(\gamma\) be a loop in \(V\setminus A\).  Apply
Lemma~\ref{lem:finite-crossings} to the compact set
\(K=\gamma(I)\subset W\setminus A\), with
\(U_0=W\setminus A\).  Since \(K\subset V\) and
\(V\cap E_s\subset W_s\), that lemma places every
crossing point in \(K\cap E_s\subset W_s\setminus A\).  Thus, after a
homotopy in \(W\setminus A\), we may assume that \(\gamma^{-1}(E_s)\) is finite, with all crossing values belonging to
\(W_s\setminus A\), and that at each intersection the path passes from \(\{h<s\}\) to \(\{h>s\}\), or conversely.  If there are no
intersections, the connected image of \(\gamma\) lies entirely in one
of \(\{h<s\}\) or \(\{h>s\}\), and the corresponding null-homotopy from Lemma~\ref{lem:subsuper-null} contracts it in \(U\setminus A\).

Otherwise, let \(\alpha\) be a maximal subpath of \(\gamma\) whose interior lies in \(\{h>s\}\), with endpoints \(x,y\in W_s\setminus A\).  By
Lemma~\ref{lem:level-avoid}, choose an arc
\(\beta\subset W_s\setminus A\) joining \(x\) to \(y\), taking instead
the constant path when \(x=y\).  The loop
\(\alpha\cdot\beta^{-1}\) lies in
\((W\cap D_s)\setminus A\), and is therefore null-homotopic in
\(U\setminus A\).  Thus \(\alpha\) may be replaced, relative to its
endpoints, by \(\beta\).  Repeating this for all such subpaths in \(\{h>s\}\) produces a loop contained in \((W\cap C_s)\setminus A\), which is
null-homotopic in \(U\setminus A\) by the null-homotopy for \(C_s\) from Lemma~\ref{lem:subsuper-null}.
\end{proof}

\begin{theorem}\label{thm:level-arc}
For \(0<s<2r\), every embedded arc \(A\subset E_s\) is
a homotopical \(Z_2\)-set in \(L\).
\end{theorem}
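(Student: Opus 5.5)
The plan is to apply the local-negligibility criterion of Banakh--Cauty--Karassev \cite[Theorem~3.2(1)]{BCK}. For a closed subset \(A\) of a metric ANR \(M\), it asserts (in the form we need) that \(A\) is a homotopical \(Z_2\)-set in \(M\) provided \(A\) is locally \(1\)-negligible in the complement sense. That is, for every \(a\in A\) and every neighborhood \(U\) of \(a\) there should be a neighborhood \(V\subset U\) such that every map \(\partial I^{k}\to V\setminus A\), \(k=1,2\), extends to a map \(I^k\to U\setminus A\), and \(M\setminus A\) should be dense, or equivalently \(A\) should have empty interior. So the proof should reduce entirely to checking the hypotheses of that criterion with \(M=L\).

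First, \(L\) is a compact metric ANR by Lemma~\ref{lem:link-pi1}, and \(A\) is compact, hence closed. Next, \(A\) has empty interior in \(L\). Every point \(a\in A\) is a limit of the points \(T(a,t)\) with \(t\neq0\), and these lie off \(E_s\) by \eqref{eq:sign}, hence off \(A\). This uses Lemma~\ref{lem:level-deformation}, which applies because \(s\in(0,2r)\), so we may take a compact interval \(J\ni s\) inside \((0,2r)\).

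The local conditions come from Proposition~\ref{prop:local-complement}. Part~(i), which joins two points of \(V\setminus A\) by a path in \(U\setminus A\), is exactly the extension of maps \(\partial I^1\to V\setminus A\) over \(I^1\). Part~(ii), which makes every loop in \(V\setminus A\) null-homotopic in \(U\setminus A\), is the extension of maps \(\partial I^2\cong S^1\to V\setminus A\) over \(I^2\). At points of \(L\setminus A\) there is nothing to check, because \(L\setminus A\) is open and locally contractible.

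The main obstacle is matching the precise formulation of \cite[Theorem~3.2(1)]{BCK}. That criterion may be phrased as ``\(A\) is locally \(k\)-negligible'' or through \(\mathcal U\)-homotopies of maps \(I^k\to M\), rather than through the \(V\subset U\) extension property. If so, the remaining work is to translate Proposition~\ref{prop:local-complement} into that language. If the criterion instead asks for the local \(LC^1\)-type control in \(M\setminus A\) to be uniform over compact pieces of \(A\), I would use the compactness of \(A\) to pass from the pointwise neighborhoods to uniform ones. Either translation is routine once the local extension property is in hand, and the conclusion, the homotopical \(Z_2\)-property, is exactly Definition~\ref{def:homotopical-z2}.
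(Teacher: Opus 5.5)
Your overall route is the paper's: the Banakh--Cauty--Karassev local criterion, density of \(L\setminus A\) via \(T\), and Proposition~\ref{prop:local-complement}. However, the step you call a routine translation is where the paper's proof does its work, and it needs an ingredient you never use. The hypothesis of \cite[Theorem~3.2(1)]{BCK} is local \(2\)-negligibility, which is a \emph{relative} condition. Each map of pairs \(f\colon(I^k,\partial I^k)\to(V,V\setminus A)\), \(k\le 2\), must be homotopic through maps of pairs into \((U,U\setminus A)\) to a map into \(U\setminus A\); that is, its class in \(\pi_k(U,U\setminus A)\) must vanish. Your complement-extension property, applied with the same \(U\), does not give this. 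For \(k=2\), Proposition~\ref{prop:local-complement}(ii) gives a disk \(g\colon I^2\to U\setminus A\) agreeing with \(f\) on \(\partial I^2\). The class of \(f\) in \(\pi_2(U,U\setminus A)\) is then the image of the sphere obtained by gluing \(f\) and \(g\), an element of \(\pi_2(U)\) that Proposition~\ref{prop:local-complement} does not control. Similarly, for \(k=1\) the path \(g\) from part~(i) can replace \(f\) only if the loop \(f\cdot g^{-1}\) is null-homotopic in \(U\).

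The missing idea is an intermediate neighborhood supplied by local contractibility of the ANR \(L\) (Lemma~\ref{lem:link-pi1}). Choose \(W\) with \(a\in W\subset U\) and \(W\hookrightarrow U\) null-homotopic, apply Proposition~\ref{prop:local-complement} with target \(W\) instead of \(U\), and take \(V\) path connected. The three cases then go as follows.
\begin{itemize}
\item For \(k=1\), the loop \(f\cdot g^{-1}\) lies in \(W\), so it is null-homotopic in \(U\).
\item For \(k=2\), the boundary of the class of \(f\) in \(\pi_2(W,W\setminus A)\) vanishes in \(\pi_1(W\setminus A)\). By exactness that class comes from \(\pi_2(W)\), which maps trivially to \(\pi_2(U)\), and naturality kills it in \(\pi_2(U,U\setminus A)\).
\item For \(k=0\), path connectedness of \(V\) and density suffice.
\end{itemize}
This is exactly the paper's proof. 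Your complement form of the criterion is valid for locally contractible spaces such as \(L\), but proving that equivalence requires this very argument, so it is not a mere change of language. The remark about uniformity over compact pieces of \(A\) is not needed.
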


\begin{proof}
We use the characterization of homotopical \(Z_2\)-sets by local
negligibility.  Since \(A\) is an embedded arc, it is compact and
hence closed in the metric space \(L\).  By
Banakh--Cauty--Karassev~\cite[Theorem~3.2(1)]{BCK}, it is therefore
enough to prove that \(A\) is locally \(2\)-negligible in \(L\).

Recall that local \(2\)-negligibility means the following
\cite[pp.~36--37]{BCK}: for every \(x\in L\), every
neighborhood \(U\) of \(x\), and every \(k=0,1,2\), there is a
neighborhood \(V\subset U\) of \(x\) such that every map of pairs
\[
 f\colon (I^k,\partial I^k)
       \longrightarrow (V,V\setminus A)
\]
is homotopic, through maps of pairs into
\((U,U\setminus A)\), to a map whose image is contained in
\(U\setminus A\).

If \(x\notin A\), choose \(V\subset U\setminus A\), and there is
nothing to prove.  We therefore fix \(a\in A\) and a neighborhood
\(U\) of \(a\).

Since \(L\) is an ANR, it is locally contractible.  Choose an open
neighborhood \(W\) of \(a\) such that
\(a\in W\subset U
\)
and the inclusion \(W\hookrightarrow U\) is null-homotopic.
Apply Proposition~\ref{prop:local-complement} with \(W\) in place of
its target neighborhood.  We obtain a neighborhood \(V_0 \subset W\) of \(a\)
such that
\begin{enumerate}[label=(\alph*)]
\item any two points of \(V_0\setminus A\) can be joined by a path
      in \(W\setminus A\);
\item every loop in \(V_0\setminus A\) is null-homotopic in
      \(W\setminus A\).
\end{enumerate}
Because \(L\) is locally path connected, we may choose a
path-connected open neighborhood
\(a\in V\subset V_0\).
Moreover, \(V\setminus A\neq\varnothing\), since
\(L\setminus A\) is dense in \(L\): a point of
\(A\subset E_s\) can be moved arbitrarily little into either
\(\{h<s\}\) or \(\{h>s\}\) by
Lemma~\ref{lem:level-deformation}.

We verify the three cases \(k=0,1,2\).

For \(k=0\), a map \(I^0\to V\) is just a point of \(V\).
Since \(V\) is path connected and \(V\setminus A\neq\varnothing\),
that point can be joined by a path in \(V\subset U\) to a point of
\(V\setminus A\).  This gives the required homotopy.

For \(k=1\), let
\[
 f\colon (I,\partial I)\longrightarrow(V,V\setminus A).
\]
The two endpoints of \(f\) lie in \(V\setminus A\).  By (a), there
is a path
\[
                         g\colon I\longrightarrow W\setminus A
\]
with the same endpoints as \(f\).  The loop obtained by following
\(f\) and then \(g\) in reverse is contained in \(W\).  Since
\(W\hookrightarrow U\) is null-homotopic, this loop is
null-homotopic in \(U\).  Hence \(f\) is homotopic to \(g\),
relative to its endpoints, through maps of pairs into
\((U,U\setminus A)\).  Since \(g(I)\subset U\setminus A\), this
proves the required assertion for \(k=1\).

Finally, let
\[
 f\colon (I^2,\partial I^2)
       \longrightarrow(V,V\setminus A).
\]
Choose a base point \(*\in\partial I^2\) and put \(x_0=f(*)\).
The boundary map
\(
 \lambda=f|_{\partial I^2}
\)
is a loop in \(V\setminus A\), and hence, by (b), is
null-homotopic in \(W\setminus A\).

Consider the image of the relative homotopy class of \(f\) in
\(\pi_2(W,W\setminus A,x_0)\).
Its boundary under the exact homotopy sequence of the pair
\((W,W\setminus A)\) is the class of \(\lambda\), which is zero.
Exactness therefore shows that this relative class lies in the image
of
\[
                  \pi_2(W,x_0)
                  \longrightarrow
                  \pi_2(W,W\setminus A,x_0).
\]
Since the inclusion \(W\hookrightarrow U\) is null-homotopic, the
induced map on \(\pi_2\) is trivial (up to the usual change of base
point along the null-homotopy).  Naturality of the exact homotopy
sequences therefore implies that the image of the class of \(f\) in
\(\pi_2(U,U\setminus A,x_0)\)
is zero.  Equivalently, \(f\) is homotopic through maps of pairs into
\((U,U\setminus A)\) to a map whose image is contained in
\(U\setminus A\).

Thus \(A\) is locally \(2\)-negligible in \(L\).  Since \(L\) is
metric, hence Tychonoff, and \(A\) is closed,
Banakh--Cauty--Karassev~\cite[Theorem~3.2(1)]{BCK} imply that
\(A\) is a homotopical \(Z_2\)-set in \(L\).
\end{proof}

\section{From arcs in distance levels to DADP and DHP}

We now pass from the avoidance theorem for a single level arc to a global
general-position property of the link.  The idea is to approximate an
arbitrary path by finitely many level arcs and then use their homotopical
\(Z_2\)-property to move a disk off the resulting finite union.

\begin{lemma}\label{lem:common-level}
For \(p\in L\) and \(s>0\), write \(E_s(p)=\{z\in L:d(p,z)=s\}\).  If \(x\ne y\) lie in a path-connected set \(Q\subset L\), there are \(p\in Q\) and \(s>0\) such that \(x,y\in E_s(p)\).  If
\(\operatorname{diam}Q<\delta\), the level \(E_s(p)\) lies in
\(B(x,2\delta)\).
\end{lemma}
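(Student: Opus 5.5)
The plan is an intermediate value argument for the first assertion, followed by the triangle inequality for the second.

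Since \(Q\) is path connected and contains \(x\) and \(y\), choose a path \(\gamma\colon I\to Q\) with \(\gamma(0)=x\) and \(\gamma(1)=y\). Consider the continuous function
\[
 \phi(t)=d(\gamma(t),x)-d(\gamma(t),y),\qquad t\in I .
\]
At the endpoints we have \(\phi(0)=-d(x,y)<0\) and \(\phi(1)=d(x,y)>0\). By the intermediate value theorem there is \(t_0\in(0,1)\) with \(\phi(t_0)=0\). Put \(p=\gamma(t_0)\in Q\) and \(s=d(p,x)=d(p,y)\). Then \(s>0\): otherwise \(p=x=y\), contradicting \(x\ne y\). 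Hence \(x,y\in E_s(p)\) by definition.

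For the diameter statement, suppose \(\operatorname{diam}Q<\delta\). Since \(p\) and \(x\) both lie in \(Q\), we get \(s=d(p,x)<\delta\). For any \(z\in E_s(p)\) the triangle inequality gives
\[
 d(x,z)\le d(x,p)+d(p,z)=2s<2\delta .
\]
Thus \(E_s(p)\subset U(x,2\delta)\subset B(x,2\delta)\).

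Two remarks on how this feeds into the earlier results:
\begin{itemize}
\item The level \(E_s(p)\) is the level set of the function \(h\) built from the new base point \(p\in L\). All the results of Sections~3 and~4 apply to it, since \(p\) was an arbitrary point of \(L\).
\item We have \(0<s<2r\) as soon as \(2\delta\le 2r\), which is the range in which Theorem~\ref{thm:level-arc} is used afterwards. This is a side remark and not part of the statement.
\end{itemize}

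There is no real obstacle in this argument. The only step needing care is making sure \(s>0\), which is exactly where \(x\ne y\) is used.
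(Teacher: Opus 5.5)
Your proof is correct and matches the paper's argument essentially step for step: the intermediate value theorem applied to \(t\mapsto d(\gamma(t),x)-d(\gamma(t),y)\) along a path in \(Q\), followed by the triangle-inequality bound \(d(z,x)\le 2s<2\delta\). Your explicit check that \(s>0\) (otherwise \(p=x=y\)) is a detail the paper leaves implicit.
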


\begin{proof}
Choose a path \(\beta\) from \(x\) to \(y\) in \(Q\).  The continuous
function
\[
 t\longmapsto d(\beta(t),x)-d(\beta(t),y)
\]
changes sign, so at some \(t_0\), with \(p=\beta(t_0)\),
\[
                         d(p,x)=d(p,y)=s>0.
\]
Thus \(x,y\in E_s(p)\).  If \(\operatorname{diam}Q<\delta\), then
\(s<\delta\), and for \(z\in E_s(p)\),
\[
                         d(z,x)\leq d(z,p)+d(p,x)=2s<2\delta .
\]
\end{proof}

\begin{corollary}\label{cor:path-approx}
For every path \(\alpha\colon I\to L\) and every \(\varepsilon>0\),
there are a path \(\alpha'\colon I\to L\) and finitely many embedded
arcs
\[
                         A_1,\ldots,A_N\subset L
\]
such that
\[
                  \alpha'(I)\subset A_1\cup\cdots\cup A_N,
\]
each \(A_i\) is contained in a distance level
\[
                         E_{s_i}(p_i)
                         =\{z\in L:d(p_i,z)=s_i\},
                         \qquad 0<s_i<2r,
\]
and
\[
                  \sup_{t\in I}d\bigl(\alpha(t),\alpha'(t)\bigr)
                  <\varepsilon .
\]
Moreover, for every open cover \(\mathcal U\) of \(L\), the
approximation may be chosen so that \(\alpha'\) is
\(\mathcal U\)-homotopic to \(\alpha\).
\end{corollary}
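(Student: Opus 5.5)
We approximate \(\alpha\) by a piecewise path whose pieces are arcs in distance levels, choosing the vertices by a fine subdivision and the levels by Lemma~\ref{lem:common-level}.

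\emph{Choosing the cover.} Fix \(\varepsilon>0\) and an open cover \(\mathcal U\) of \(L\). Since \(L\) is a compact ANR (Lemma~\ref{lem:link-pi1}), it is uniformly locally contractible and locally path connected. Choose \(\delta>0\) with \(5\delta<\varepsilon\) such that every set of diameter less than \(5\delta\) lies in some member of \(\mathcal U\). Shrinking \(\delta\) further, we may also arrange that every subset of diameter less than \(5\delta\) lies in an open set \(W\) which contracts inside a member of \(\mathcal U\) of diameter less than \(\varepsilon\). By local path connectedness and compactness, there is \(\eta>0\) such that any two points at distance less than \(\eta\) lie in a path-connected set of diameter less than \(\delta\).

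\emph{Building the arcs.} Subdivide \(0=t_0<\dots<t_N=1\) so that each \(\alpha([t_{i-1},t_i])\) has diameter less than \(\min(\eta,\delta)\). Set \(x_i=\alpha(t_i)\); we may discard repetitions, or treat them with a constant piece. For each \(i\) with \(x_{i-1}\ne x_i\), pick a path-connected \(Q_i\ni x_{i-1},x_i\) of diameter less than \(\delta\). Lemma~\ref{lem:common-level} then gives \(p_i\) and \(s_i>0\) with \(x_{i-1},x_i\in E_{s_i}(p_i)\subset B(x_{i-1},2\delta)\). Since \(s_i<\delta<2r\), we have \(0<s_i<2r\), and \(p_i\in L\). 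Lemma~\ref{lem:level-connected}, applied with \(p_i\) in place of \(p\), says \(E_{s_i}(p_i)\) is path connected; it is Hausdorff, hence arcwise connected. So there is an embedded arc \(A_i\subset E_{s_i}(p_i)\) from \(x_{i-1}\) to \(x_i\). Define \(\alpha'\) on \([t_{i-1},t_i]\) as a parametrization of \(A_i\) from \(x_{i-1}\) to \(x_i\). These pieces agree at the vertices, so \(\alpha'\) is a path with \(\alpha'(I)\subset\bigcup A_i\).

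\emph{The estimates.} For \(t\in[t_{i-1},t_i]\), both \(\alpha(t)\) and \(\alpha'(t)\) lie within \(2\delta\) of \(x_{i-1}\). Hence \(d(\alpha(t),\alpha'(t))<4\delta<\varepsilon\). For the \(\mathcal U\)-homotopy, on each subinterval the set \(\alpha([t_{i-1},t_i])\cup A_i\) has diameter less than \(5\delta\), so it lies in a set \(W_i\) contracting in some \(U_i\in\mathcal U\). Take the homotopy with constant vertex tracks. The loop \(\alpha|\cdot(\alpha'|)^{-1}\) is null-homotopic in \(U_i\), so it extends over the square \([t_{i-1},t_i]\times I\) with image in \(U_i\). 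The squares glue along the constant vertex tracks. Every point-track then lies in a single \(U_i\), so the homotopy is a \(\mathcal U\)-homotopy.

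The main obstacle is this last step. The point-tracks of the glued homotopy must each stay inside one member of \(\mathcal U\), which is why we ask for contractibility of \(W_i\) inside a member of \(\mathcal U\), not just inside \(L\). Uniform local contractibility of the compact ANR \(L\) supplies this, and the mesh of the subdivision must be chosen accordingly, as above.
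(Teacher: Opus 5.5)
Your construction of \(\alpha'\) is the same as the paper's. You take a fine subdivision, use Lemma~\ref{lem:common-level} to place consecutive vertices on a common level \(E_{s_i}(p_i)\subset B(x_{i-1},2\delta)\), and use Lemma~\ref{lem:level-connected} (with \(p_i\) in place of \(p\)) to join them by an arc in that level. Your auxiliary \(\eta\) is harmless but unnecessary, since \(Q_i=\alpha([t_{i-1},t_i])\) is already path connected of diameter less than \(\delta\).

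The real difference is the \(\mathcal U\)-homotopy. The paper gets it from the ANR fact (Hanner) that sufficiently close maps into the compact ANR \(L\) are \(\mathcal U\)-homotopic, so it only needs \(\alpha'\) close enough. You instead build the homotopy by hand, stationary at the subdivision points. You fill each square \([t_{i-1},t_i]\times I\) using a Lebesgue number of \(\mathcal U\) and uniform local contractibility, which is the same square-filling device the paper uses in Lemma~\ref{lem:finite-crossings}. Your route is self-contained and gives a homotopy rel vertices; the paper's is a one-line citation. This step is correct after one wording fix. You cannot in general require the member of \(\mathcal U\) to have diameter less than \(\varepsilon\) (take \(\mathcal U=\{L\}\)). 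You only need \(W\) to contract inside some member of \(\mathcal U\), and uniform local contractibility plus a Lebesgue number provides exactly that.

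There is one small gap. If all the vertices \(x_i\) coincide (for instance, \(\alpha\) constant), either of your ways of handling repetitions produces no arcs at all. Then \(\alpha'(I)\) is a single point and the conclusion \(\alpha'(I)\subset A_1\cup\cdots\cup A_N\) fails. The paper handles a repeated vertex by choosing \(y_i\ne x_i\) nearby and taking a level arc through \(x_i\) and \(y_i\) from Lemma~\ref{lem:common-level}. You need this at least in the all-equal case.

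Also, before writing \(s_i<\delta<2r\), impose \(\delta<2r\) explicitly, as the paper does. Alternatively, note that \(s_i<2r\) is automatic, because the distinct points \(x_{i-1}\) and \(x_i\) cannot both be the unique antipode of \(p_i\).
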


\begin{proof}
Fix \(\varepsilon>0\), and choose
\(0<\delta<2r\)
with \(3\delta<\varepsilon\). By uniform continuity of \(\alpha\), choose a subdivision
\[
                  0=t_1<t_2<\cdots<t_{N+1}=1
\]
such that, for
\(
Q_i=\alpha([t_i,t_{i+1}])\),
one has
\(\operatorname{diam}Q_i<\delta
\)
for \(i=1,\ldots,N\).

Put
\(x_i=\alpha(t_i)\).
Suppose first that \(x_i\neq x_{i+1}\).  By
Lemma~\ref{lem:common-level}, there are \(p_i\in Q_i\) and
\(s_i>0\) such that
\(x_i,x_{i+1}\in E_{s_i}(p_i)\),
with
\(s_i<\delta<2r\)
and
\(E_{s_i}(p_i)\subset B(x_i,2\delta)\).
The point \(p\in L\) used in Sections~2--4 was arbitrary, so
Lemma~\ref{lem:level-connected} applies equally to
\(E_{s_i}(p_i)\).  Thus \(E_{s_i}(p_i)\) is path connected, and there is an
embedded arc
\(A_i\subset E_{s_i}(p_i)
\)
joining \(x_i\) to \(x_{i+1}\).  Parametrize \(A_i\) over
\([t_i,t_{i+1}]\), preserving the endpoints.

If \(x_i=x_{i+1}\), use the constant replacement path at \(x_i\).
To keep its image in a union of nondegenerate level arcs, choose
\(y_i\ne x_i\) in a small path-connected neighborhood of \(x_i\), and
apply the preceding construction to \(x_i,y_i\), obtaining
a level arc \(A_i\) containing \(x_i\).

Concatenating the replacement paths gives a path
\(\alpha'\colon I\to L\).  On a subinterval with distinct endpoints,
both the replacement arc and the original subpath lie near \(x_i\):
for \(t\in[t_i,t_{i+1}]\),
\[
 d\bigl(\alpha(t),\alpha'(t)\bigr)
 \leq d(\alpha(t),x_i)+d(x_i,\alpha'(t))
 <\delta+2\delta
 <\varepsilon .
\]
The same estimate is immediate on a constant replacement interval.
Hence \(\alpha'\) is \(\varepsilon\)-close to \(\alpha\), and its
image is contained in a finite union of embedded arcs lying in
distance levels \(E_s(p)\) with \(0<s<2r\).

Finally, \(L\) is a compact metric ANR.  Therefore, given an open cover
\(\mathcal U\) of \(L\), sufficiently close maps into \(L\) are
\(\mathcal U\)-homotopic.  Choosing the preceding approximation
sufficiently close proves the last assertion.
\end{proof}

\begin{lemma}\label{lem:finite-union}
A finite union of homotopical \(Z_2\)-sets in a metric ANR is a
homotopical \(Z_2\)-set.
\end{lemma}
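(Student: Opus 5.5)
By induction on the number of sets, it suffices to treat two closed sets \(A,B\subset M\), each a homotopical \(Z_2\)-set in the metric ANR \(M\). We show that \(A\cup B\) is one. Note first that \(A\cup B\) is closed. Fix an open cover \(\mathcal U\) of \(M\) and a map \(f\colon I^2\to M\).

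The plan is to move \(f\) off \(A\) first, and then off \(B\) by a homotopy so small that it cannot reach \(A\). The key device is an open cover of \(M\setminus A\) that becomes finer near \(A\).

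\textbf{Step 1.} Choose an open cover \(\mathcal V\) of \(M\) that star-refines \(\mathcal U\). Concatenating a \(\mathcal V\)-homotopy with another \(\mathcal V\)-homotopy then produces a homotopy whose point-tracks each lie in \(\operatorname{St}(x,\mathcal V)\), hence in a member of \(\mathcal U\). Since \(A\) is a homotopical \(Z_2\)-set, there is a map \(f_1\colon I^2\to M\setminus A\) that is \(\mathcal V\)-homotopic to \(f\).

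\textbf{Step 2.} The image \(K=f_1(I^2)\) is compact and misses \(A\). Hence \(\eta=d(K,A)>0\) (if \(A=\varnothing\) there is nothing to do). Let \(\mathcal W\) be the cover consisting of all sets \(V\cap U(x,\eta/3)\) with \(V\in\mathcal V\) and \(x\in M\). This is an open cover of \(M\) refining \(\mathcal V\), and every member has diameter less than \(2\eta/3\). Since \(B\) is a homotopical \(Z_2\)-set, there is a map \(f_2\colon I^2\to M\setminus B\) that is \(\mathcal W\)-homotopic to \(f_1\).

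Each point-track of this homotopy starts in \(K\) and lies in a set of diameter less than \(\eta\). So every point \(f_2(z)\) lies within distance \(2\eta/3\) of \(K\), and therefore \(f_2(z)\notin A\). Thus \(f_2\) maps into \(M\setminus(A\cup B)\). Moreover \(f_2\) is \(\mathcal V\)-homotopic to \(f_1\), so by Step 1 it is \(\mathcal U\)-homotopic to \(f\).

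Compactness of \(I^2\) is what makes this work: it yields the positive distance \(\eta\). So no delicate step arises. The only point needing care is the bookkeeping of the star-refinement when the two homotopies are concatenated.
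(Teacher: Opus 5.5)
Your proof is correct and follows the paper's argument. You star-refine \(\mathcal U\), push \(f\) off \(A\), push the result off \(B\) using a cover fine enough near \(K=f_1(I^2)\) that the second homotopy cannot reach \(A\), and then concatenate the two homotopies via the star refinement. The only difference is cosmetic: you build the finer cover metrically from \(\eta=d(K,A)>0\), whereas the paper simply requires that every member of \(\mathcal W\) meeting \(K\) lie in \(M\setminus A_1\).
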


\begin{proof}
It is enough to treat the union of two sets.  Let \(A_1,A_2\subset M\)
be homotopical \(Z_2\)-sets, let \(\mathcal U\) be an open cover of
the metric ANR \(M\), and let
\(f\colon I^2\to M
\)
be a map.

Choose an open star refinement \(\mathcal V\) of \(\mathcal U\).
Since \(A_1\) is a homotopical \(Z_2\)-set, there is a map
\(f_1\colon I^2\to M\setminus A_1
\),
which is \(\mathcal V\)-homotopic to \(f\).

The compact set
\(K=f_1(I^2)
\)
is contained in the open set \(M\setminus A_1\).  Choose an open
cover \(\mathcal W\) of \(M\), refining \(\mathcal V\), such that
every member of \(\mathcal W\) which meets \(K\) is contained in
\(M\setminus A_1\).  Since \(A_2\) is a homotopical \(Z_2\)-set,
there is a map
\(f_2\colon I^2\to M\setminus A_2
\),
which is \(\mathcal W\)-homotopic to \(f_1\).

For each \(x\in I^2\), the track of this second homotopy is contained
in some \(W_x\in\mathcal W\) containing \(f_1(x)\).  Hence
\(W_x\cap K\neq\varnothing\), and therefore
\(W_x\subset M\setminus A_1
\).
Thus the entire second homotopy, and in particular \(f_2(I^2)\),
misses \(A_1\).  Consequently
\(f_2(I^2)\cap(A_1\cup A_2)=\varnothing\).

It remains only to check the control.  The first homotopy track at
\(x\) lies in some \(V_x\in\mathcal V\), while the second lies in
some \(W_x\subset V'_x\) with \(V'_x\in\mathcal V\).  Both
\(V_x\) and \(V'_x\) contain \(f_1(x)\), and hence intersect.
Because \(\mathcal V\) star-refines \(\mathcal U\), their union is
contained in one member of \(\mathcal U\).  The concatenation of the
two homotopies is therefore a \(\mathcal U\)-homotopy from \(f\) to
\(f_2\).

Thus \(A_1\cup A_2\) is a homotopical \(Z_2\)-set.  The finite case
follows by induction.
\end{proof}

\begin{theorem}\label{thm:dadp-dhp}
The link \(L\) has the disjoint arc-disk property and the disjoint
homotopies property.
\end{theorem}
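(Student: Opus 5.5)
We first establish DADP in its homotopical form \(0\text{-}\DDh^{\{1,2\}}\), and then obtain DHP by citing Banakh--Valov. Fix an open cover \(\mathcal U\) of \(L\), a path \(\alpha\colon I\to L\) and a map \(f\colon I^2\to L\). Choose an open star refinement \(\mathcal V\) of \(\mathcal U\).

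Apply Corollary~\ref{cor:path-approx} with the cover \(\mathcal V\). This gives a path \(\alpha'\) that is \(\mathcal V\)-homotopic to \(\alpha\), together with embedded arcs \(A_1,\dots,A_N\) covering \(\alpha'(I)\). Each \(A_i\) lies in a level \(E_{s_i}(p_i)\) with \(0<s_i<2r\).

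The point \(p\) in Sections~2--4 was arbitrary, and \(n\geq5\). Hence Theorem~\ref{thm:level-arc}, applied with \(p_i\) in place of \(p\), shows that each \(A_i\) is a homotopical \(Z_2\)-set in \(L\). By Lemma~\ref{lem:finite-union}, the compact set \(A=A_1\cup\dots\cup A_N\) is also a homotopical \(Z_2\)-set in \(L\), because \(L\) is a metric ANR by Lemma~\ref{lem:link-pi1}.

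Definition~\ref{def:homotopical-z2} then gives a map \(f'\colon I^2\to L\setminus A\) that is \(\mathcal V\)-homotopic to \(f\). Consequently \(\alpha'(I)\cap f'(I^2)\subset A\cap f'(I^2)=\varnothing\). Since \(\mathcal V\) refines \(\mathcal U\), both homotopies are \(\mathcal U\)-homotopies, and so \(L\) has \(0\text{-}\DDh^{\{1,2\}}\).

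To pass to the approximation formulation of DADP, take \(\mathcal U\) to be the cover by open \(\varepsilon/2\)-balls. The new maps are then \(\varepsilon\)-close to the original ones. Conversely, Hanner's theorem, as recalled in Section~\ref{sec:introduction}, identifies the two formulations for compact metric ANRs.

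For DHP we invoke \cite[Theorem~8.3(2)]{BV}, the route indicated in the introduction. In the setting of locally compact (here compact) metric ANRs, that result deduces \(1\text{-}\DDh^{\{1,1\}}\) from \(0\text{-}\DDh^{\{1,2\}}\). The main point needing care is to check that the hypotheses of \cite[Theorem~8.3(2)]{BV} are met by \(L\): it is a compact, finite-dimensional, locally compact metric ANR. If that theorem also requires further hypotheses, for instance a lower bound on dimension, we would verify them using the homology-manifold structure of \(L\) from Lemma~\ref{lem:link-pi1}.

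The substantive work has already been done in Theorem~\ref{thm:level-arc}. The only remaining subtlety is that Corollary~\ref{cor:path-approx} produces arcs in levels of \emph{different} distance functions \(h_{p_i}\). This is handled by noting that all of Sections~2--4 holds for every base point \(p\in L\).
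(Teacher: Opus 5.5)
Your proposal is correct and follows the paper's proof essentially step by step. You approximate the path by one lying in finitely many level arcs (Corollary~\ref{cor:path-approx}), make their union a homotopical $Z_2$-set via Theorem~\ref{thm:level-arc} and Lemma~\ref{lem:finite-union}, push the disk off it, and cite \cite[Theorem~8.3(2)]{BV} for DHP. The differences are cosmetic: the star refinement $\mathcal V$ is unnecessary, since the two homotopies act on different maps and are never concatenated. Also, the cited theorem formally requires both $0\text{-}\DDh^{\{1,2\}}$ and $0\text{-}\DDh^{\{2,1\}}$, which coincide after interchanging the two maps.
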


\begin{proof}
We first prove the homotopical
\(0\text{-}\DDh^{\{1,2\}}\)-property; see
Section~\ref{sec:introduction} for the definition.

Let \(\mathcal U\) be an open cover of \(L\), and let
\[
                  \alpha\colon I\longrightarrow L,
                  \qquad
                  f\colon I^2\longrightarrow L
\]
be maps.  By Corollary~\ref{cor:path-approx}, after a
\(\mathcal U\)-small homotopy we may replace \(\alpha\) by a path
\(\alpha'\) whose image is contained in a finite union
\[
                         A=A_1\cup\cdots\cup A_N
\]
of embedded arcs, where
\[
                         A_i\subset E_{s_i}(p_i),
                         \qquad 0<s_i<2r.
\]

The choice of the point \(p\in L\) in Sections~2--4 was arbitrary.
Hence Theorem~\ref{thm:level-arc} applies to each \(A_i\), and each
\(A_i\) is a homotopical \(Z_2\)-set in \(L\).  By
Lemma~\ref{lem:finite-union}, the finite union \(A\) is itself a
homotopical \(Z_2\)-set.

By the definition of a homotopical \(Z_2\)-set, \(f\) is
\(\mathcal U\)-homotopic to a map
\(f'\colon I^2\to L\setminus A\).
Since \(\alpha'(I)\subset A\), we have
\(\alpha'(I)\cap f'(I^2)=\varnothing\).
Thus \(L\) has the homotopical
\(0\text{-}\DDh^{\{1,2\}}\)
property in the notation of Banakh--Valov.  Since \(L\) is a metric
ANR, this is equivalent to the usual approximation formulation of
DADP used in Section~1.

DHP follows from \cite[Theorem~8.3(2)]{BV}, with \(m=0\) and \(n=k=1\):
\[
  0\text{-}\DDh^{\{1,2\}}
  \cap
  0\text{-}\DDh^{\{2,1\}}
  \ \Longrightarrow\
  1\text{-}\DDh^{\{1,1\}} .
\]
The two properties on the left differ only by interchanging the maps.
The property on the right is the homotopical form of DHP, and agrees with
its approximation form because \(L\) is a metric ANR.  As noted immediately
after that theorem, this implication for separable locally compact ANRs
was established earlier by Halverson~\cite[Corollary~4.5]{Halverson}.
\end{proof}

\section{The ambient disjoint disks property}

It remains to pass from \(L\times\mathbb R\) to the ambient \(G\)-space.
The punctured small metric ball already has a radial product structure.  The
only additional issue is the center, which we first show can be avoided by
maps of dimension at most two.

\begin{proposition}[avoidance of the center]\label{prop:vertex-z2}
The singleton \(\{c\}\) is a homotopical \(Z_2\)-set in \(X\).
\end{proposition}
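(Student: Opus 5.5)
The plan follows the template of Theorem~\ref{thm:level-arc}. By Banakh--Cauty--Karassev~\cite[Theorem~3.2(1)]{BCK} it suffices to show that the closed set \(\{c\}\) is locally \(2\)-negligible in the metric ANR \(X\). At points \(x\neq c\) we take \(V\subset U\setminus\{c\}\), so the only case to check is \(x=c\). Fix a neighborhood \(U\) of \(c\). Local contractibility of \(X\) (radial contraction to \(c\)) lets us shrink \(U\) so that it contains a small ball \(W=U(c,\rho)\), \(\rho<4r\), whose inclusion \(W\hookrightarrow U\) is null-homotopic. The radial contraction of \(W\) to \(c\) stays in \(W\), so \(W\) is in fact contractible. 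We take \(V=W\) and aim to verify the negligibility condition for \(k=0,1,2\) with target \((U,U\setminus\{c\})\).

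The key input is the product structure \eqref{eq:radial-product}: \(W\setminus\{c\}\cong L\times(0,\rho)\). Lemma~\ref{lem:link-pi1} gives that \(L\) is path connected and, since \(n\geq5\geq3\), simply connected. Hence \(W\setminus\{c\}\) is nonempty, path connected and simply connected. We then check the three cases.

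\begin{itemize}
\item For \(k=0\), move the point \(c\) slightly along a radial segment into \(W\setminus\{c\}\).
\item For \(k=1\), let \(f\colon(I,\partial I)\to(W,W\setminus\{c\})\). Join its endpoints by a path \(g\) in \(W\setminus\{c\}\). The loop \(f\cdot g^{-1}\) lies in \(W\), hence is null-homotopic in \(U\). So \(f\simeq g\) rel endpoints, through maps of pairs into \((U,U\setminus\{c\})\).
\item For \(k=2\), let \(f\colon(I^2,\partial I^2)\to(W,W\setminus\{c\})\). The boundary loop \(\lambda=f|_{\partial I^2}\) lies in the simply connected set \(W\setminus\{c\}\), so its class in \(\pi_1(W\setminus\{c\})\) vanishes. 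By exactness of the homotopy sequence of \((W,W\setminus\{c\})\), the class of \(f\) in \(\pi_2(W,W\setminus\{c\},x_0)\) comes from \(\pi_2(W,x_0)\). Since \(W\) is contractible, that group is zero (already \(W\hookrightarrow U\) null-homotopic suffices, exactly as in the proof of Theorem~\ref{thm:level-arc}). So the relative class vanishes, and \(f\) compresses, through maps of pairs, into \(U\setminus\{c\}\).
\end{itemize}

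The main obstacle is the \(k=2\) step, which needs \(\pi_1(L)=0\). That is exactly where Lemma~\ref{lem:link-pi1} enters, and why the link rather than a generic neighborhood is used. One must also be careful that the pair homotopy sequence argument gives a homotopy of pairs into \((U,U\setminus\{c\})\) rather than merely into \((W,\cdot)\); naturality under the inclusion \((W,W\setminus\{c\})\to(U,U\setminus\{c\})\) handles this. With local \(2\)-negligibility established at every point, \cite[Theorem~3.2(1)]{BCK} yields that \(\{c\}\) is a homotopical \(Z_2\)-set in \(X\).
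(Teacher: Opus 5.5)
Your proposal is correct and follows the paper's proof. Both arguments use the radial product structure \(U(c,\rho)\setminus\{c\}\cong L\times(0,\rho)\), simple connectivity of the link, and contractibility of small balls, feed these into the exact homotopy sequence to get \(\pi_k(W,W\setminus\{c\})=0\) for \(k\leq 2\), and conclude with Banakh--Cauty--Karassev's Theorem~3.2(1). The only difference is cosmetic: the paper plugs the vanishing relative groups into the BCK neighborhood-basis criterion (their Theorem~2.1(4)), whereas you verify the local \(2\)-negligibility condition by hand, as in the proof of Theorem~\ref{thm:level-arc}.
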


\begin{proof}
For sufficiently small \(a>0\), put
\[
                         N_a=U(c,a),
        \qquad            L_a=S(c,a).
\]
As in \eqref{eq:radial-product}, the radial coordinates give
\[
        N_a\setminus\{c\}\cong L_a\times(0,a),
\]
while radial contraction to \(c\) shows that \(N_a\) is contractible.

The link \(L_a\) is path connected, and
Lemma~\ref{lem:link-pi1} gives
\(\pi_1(L_a)=0\).
Hence \(N_a\setminus\{c\}\) is path connected and simply connected.
The exact homotopy sequence of the pair
\((N_a,N_a\setminus\{c\})\) therefore gives
\[
        \pi_k\bigl(N_a,N_a\setminus\{c\}\bigr)=0,
        \qquad k=1,2.
\]
The corresponding \(k=0\) condition is immediate from the
path connectedness, and in particular nonemptiness, of
\(N_a\setminus\{c\}\).

The sufficiently small metric balls \(N_a\) form a neighborhood
basis at \(c\).  Hence the neighborhood-basis criterion in
\cite[Theorem~2.1(4)]{BCK} is satisfied through dimension \(2\), and
\(\{c\}\) is locally \(2\)-negligible at \(c\).

At every point \(x\neq c\), local \(2\)-negligibility is immediate:
one may choose a neighborhood of \(x\) disjoint from \(\{c\}\).
Hence \(\{c\}\) is locally \(2\)-negligible in \(X\).

Finally, \(\{c\}\) is closed and \(X\) is metric, hence Tychonoff.
Banakh--Cauty--Karassev~\cite[Theorem~3.2(1)]{BCK} therefore imply
that \(\{c\}\) is a homotopical \(Z_2\)-set in \(X\).
\end{proof}

\begin{proof}[Proof of Theorem~\ref{thm:main}]
By Theorem~\ref{thm:dadp-dhp}, the link \(L\) has DADP.
The compact metric ANR \(L\) is locally \(1\)-connected and has local
dimension \(n-1\geq4\), so it satisfies the standing hypotheses of
\cite[Section~2]{Daverman}.  Daverman's product theorem
\cite[Proposition~2.10]{Daverman} therefore gives
\[
                     \DADP(L)\Longrightarrow\DDP(L\times\R).
\tag{6.1}\label{eq:product}
\]
Alternatively, the same conclusion follows from the DHP assertion of
Theorem~\ref{thm:dadp-dhp} and Halverson's product theorem
\cite[Theorem~3.4]{Halverson}.

We first show that every sufficiently small metric ball in \(X\)
has DDP.  Fix \(c\in X\), choose \(r\) as in
\eqref{eq:radius-choice}, and let
\(N=U(c,2r)\). Let
\(f,g\colon I^2\to N
\)
be two maps, and let \(\varepsilon>0\).
Since \(f(I^2)\cup g(I^2)\) is compact in \(N\), there is
\(a<2r\) such that
\(f(I^2)\cup g(I^2)
                 \subset B(c,a)\).
Choose \(b\) with
\(a<b<2r\),
and then choose
\(0<\delta<
\min\left\{\frac{\varepsilon}{3},
\frac{b-a}{3}\right\}\). By Proposition~\ref{prop:vertex-z2}, the singleton \(\{c\}\) is a
homotopical \(Z_2\)-set in \(X\).  In particular, after choosing the
control sufficiently small, \(f\) and \(g\) admit
\(\delta\)-close approximations
\(f_1,g_1\colon I^2\longrightarrow X\setminus\{c\}\).
Because the original images are contained in
\(B(c,a)\) and \(\delta<b-a\), we have
\(f_1(I^2)\cup g_1(I^2)\subset U(c,b)\setminus\{c\}\). The compact set
\(f_1(I^2)\cup g_1(I^2)
\)
does not contain \(c\).  Hence there is \(\eta>0\) such that
\(d(c,x)>\eta
\)
for every
\(x\in f_1(I^2)\cup g_1(I^2)\).
Both images are therefore contained in the open annulus
\[
             A_{\eta,b}
             =\{x\in X:\eta<d(c,x)<b\}.
\]

By the radial product coordinates,
\[
                         A_{\eta,b}
                         \cong L\times(\eta,b)
                         \cong L\times\R.
\]
Equation~\eqref{eq:product} therefore implies that
\(A_{\eta,b}\) has DDP.  Applying DDP there with approximation
size less than \(\delta\), we obtain maps
\(f_2,g_2\colon I^2\longrightarrow A_{\eta,b}
\)
such that
\(f_2(I^2)\cap g_2(I^2)=\varnothing
\)
and
\[
        d(f_1,f_2)<\delta,\qquad
        d(g_1,g_2)<\delta
\]
uniformly on \(I^2\).  Consequently
\[
        d(f,f_2)<2\delta<\varepsilon,\qquad
        d(g,g_2)<2\delta<\varepsilon.
\]
Thus \(N\) has the usual disjoint disks property.

Since \(N\) is an open subset of the metric ANR \(X\), it is itself
a metric ANR.  By Hanner's controlled homotopy theorem
\cite[Theorem~4.1]{Hanner}, the usual approximation formulation of DDP on
\(N\) is equivalent to the homotopical
\(0\text{-}\DDh^{\{2,2\}}\)-property.  Hence every ball \(U(c,2r)\) above
has the homotopical
\(0\text{-}\DDh^{\{2,2\}}
\)
property.

The balls \(U(c,2r)\), with \(c\in X\) arbitrary and \(r\) chosen
as in \eqref{eq:radius-choice}, form an open cover of \(X\).
Since \(X\) is metric, it is paracompact and submetrizable.
Banakh--Valov~\cite[Proposition~5.4(2)]{BV} therefore imply that
\(X\) has the homotopical
\(0\text{-}\DDh^{\{2,2\}}
\)
property.

Finally, let \(\varepsilon>0\).  Apply this homotopical property to
an open cover of \(X\) by sets of diameter less than
\(\varepsilon\).  The resulting disk maps have disjoint images and
are \(\varepsilon\)-close to the original maps.  Hence \(X\) has
DDP.
\end{proof}

\begin{remark}
Proposition~\ref{prop:vertex-z2} can be bypassed as follows.  Indeed, by \cite[Lemma~3.2]{BHR},
\(\rho\) is locally bounded away from zero.  For each \(x\in X\),
choose \(r>0\) such that \(8r<\rho(c)\) for every \(c\in U(x,r)\),
and choose \(c\in U(x,r)\setminus\{x\}\).  Then
\(x\in U(c,4r)\setminus\{c\}\), and
\eqref{eq:radial-product} gives
\[
 U(c,4r)\setminus\{c\}
 \cong S(c,r)\times(0,4r)
 \cong S(c,r)\times\R.
\]
By Theorem~\ref{thm:dadp-dhp} and \eqref{eq:product}, this punctured
ball has DDP.  Since \(x\) was arbitrary, these punctured balls
form an open cover of \(X\).
Each is an ANR, so it also has the homotopical
\(0\text{-}\DDh^{\{2,2\}}\)-property.
Applying \cite[Proposition~5.4(2)]{BV} to this cover gives DDP
for \(X\).
\end{remark}

\section*{Acknowledgements}
The second author gratefully acknowledges the profound influence
of Bob Daverman on his mathematical development.
Daverman introduced him to decomposition theory and wild topology, and they worked
closely for many years, coauthoring several papers.

The authors used OpenAI Codex with the GPT-5.6 Sol Ultra model during
the development and preparation of this manuscript.  The authors first
developed the main mathematical ideas and an initial proof sketch, and
provided Codex with a collection of relevant references known to them
at that stage.  Codex was then used to help expand the sketch into a
preliminary manuscript and subsequently as an interactive research tool
for exploring and refining proof strategies, examining intermediate
arguments, searching for relevant literature, and improving the
exposition.  In particular, Codex identified the paper of Todorov and
Valov~\cite{TV} as potentially relevant to the arc-avoidance argument;
the proof presented here instead uses Bredon's nonseparation theorem.
The resulting
manuscript was checked, substantially revised, and rewritten by the
authors.  All mathematical statements, proofs, and references in the
final manuscript were independently verified by the authors, who take
full responsibility for the content.

The first author was supported by JSPS KAKENHI Grant
No.~25K23336.  The second author was supported by NSFC Grant
No.~12201102.

\end{document}